\newtheorem{remark}[theorem]{Remark}
\newtheorem{example}[theorem]{Example}
\newtheorem{assumption}{Assumption}
\newcommand{\mpol}[1]{\mathbb C[x]^{#1\times #1}}
\newcommand{\mc}[1]{#1}
\title{On a class of matrix pencils and $\ell$-ifications equivalent to a given matrix polynomial\thanks{Work supported by Gruppo Nazionale di Calcolo Scientifico (GNCS) of INdAM}}
\author{Dario A. Bini
\thanks{Dipartimento di Matematica, Universit\`a di Pisa,
Largo Bruno Pontecorvo 5, 56127 Pisa ({\tt bini@dm.unipi.it})}
\and
Leonardo Robol
\thanks{Scuola Normale Superiore, Piazza dei Cavalieri, Pisa
({\tt leonardo.robol@sns.it})}
}
\begin{document}
  \maketitle

\begin{abstract}
A new class of linearizations and $\ell$-ifications for $m\times m$
matrix polynomials $P(x)$ of degree $n$ is proposed.  The
$\ell$-ifications in this class have the form $A(x) = D(x) + (e\otimes I_m) W(x)$
where $D$ is a block diagonal matrix polynomial with blocks $B_i(x)$
of size $m$, $W$ is an $m\times qm$ matrix polynomial and $e=(1,\ldots,1)^t\in\mathbb C^q$,
for a suitable integer $q$. 
The blocks
$B_i(x)$ can be chosen a priori, subjected to some restrictions. Under
additional assumptions on the blocks $B_i(x)$ the matrix polynomial
$A(x)$ is a strong $\ell$-ification, i.e., the reversed polynomial of
$A(x)$ defined by $A^\#(x) := x^{\deg A(x)} A(x^{-1})$ is an
$\ell$-ification of $P^\#(x)$.  The eigenvectors of the matrix
polynomials $P(x)$ and $A(x)$ are related by means of explicit
formulas.  Some practical examples of $\ell$-ifications are provided.
A strategy for choosing  $B_i(x)$ in such a way that  $A(x)$ is
a well conditioned linearization of $P(x)$ is proposed.  Some
numerical experiments  that validate the theoretical
results are reported.

  \end{abstract}

  \noindent {\sl AMS classification:}  65F15, 15A21, 15A03  \\
  \noindent {\sl Keywords:}  Matrix polynomials, matrix pencils, linearizations, companion matrix, tropical roots. 

\section{Introduction}
A standard way to deal with an $m\times m$ matrix polynomial $P(x) =
\sum_{i = 0}^n P_i x^i$ is to convert it to a linear pencil, that is
to a linear matrix polynomial of the form $L(x) = Ax - B$ where $A$
and $B$ are $mn \times mn$ matrices such that $\det P(x) = \det L(x)$.
This process, known as {\em linearization}, has been considered
in \cite{glr:book}.

In certain cases, like for matrix polynomials modeling Non-Skip-Free
stochastic processes \cite{blm:book}, it is more convenient to reduce
the matrix polynomial 
to a quadratic polynomial of the form $Ax^2+Bx+C$, where $A,B,C$ are
 matrices of suitable size \cite{blm:book}. The process that we obtain
this way is referred to as {\em quadratization}.  If $P(x)$ is a
matrix power series, like in M/G/1 Markov chains
\cite{neuts:book2,neuts:book1}, the quadratization of $P(x)$ can be
obtained with block coefficients of infinite size \cite{ram}.  In this
framework, the quadratic form is desirable since it is better suited
for an effective solution of the stochastic model; in fact it
corresponds to a QBD process for which there exist efficient solution
algorithms \cite{blm:book}, \cite{lr:book}. In other situations it is
preferable to reduce the matrix polynomial $P(x)$ of degree $n$ to a
matrix polynomial of lower degree $\ell$. This process is called
 {\em $\ell$-ification} in \cite{ellification}.

Techniques for linearizing a matrix polynomial have been widely
investigated. Different companion forms of a matrix polynomial have
been introduced and analyzed, see for instance \cite{acl,ddm12,mmmm06}
and the literature cited therein. A wide literature exists on matrix
polynomials with contribution of many authors
\cite{at,coreless2004,ddm9,ddm10,ddm12,glr:book,hmmt,hmt06,hmt09, ttz,
  tz}, motivated both by the theoretical interest of this subject and
by the many applications that matrix polynomials have
\cite{blm:book,lr:book,mmmm-vib,neuts:book2,neuts:book2,tm}.  Techniques for
reducing a matrix polynomial, or a matrix power series into quadratic
form, possibly with coefficients of infinite size, have been
investigated in \cite{blm:book,ram}. Reducing a matrix polynomial to a
polynomial of degree $\ell$ is analyzed in \cite{ellification}.

Denote by $\mpol m$ the set of $m\times m$ matrix polynomials over the
complex field $\mathbb C$.  If $P(x)=\sum_{i=0}^nP_ix^i\in\mpol m$ and
$P_n\ne 0$ we say that $P(x)$ has \emph{degree} $n$.  If $\det P(x)$ is not
identically zero we say that $P(x)$ is \emph{regular}. Throughout the
paper we assume that $P(x)$ is a regular polynomial of degree $n$. The
following definition is useful in our framework.

\begin{definition}\label{def:1}
  Let $P(x)\in \mpol m$ be a matrix polynomial of degree $n$. Let $q$
  be an integer such that $0 < q \leq n$. We say that a matrix
  polynomial $Q(x) \in \mpol{qm}$ is \emph{equivalent} to $P(x)$, and
  we write $P(x)\approx Q(x)$ if there exist two matrix polynomials
  $E(x),F(x)\in\mpol{qm}$ such that $\det E(x)$ and $\det F(x)$ are
  nonzero constants, that is $E(x)$ and $F(x)$ are {\em unimodular},
  and
  \[
    E(x) Q(x) F(x) = \begin{bmatrix} 
     I_{m(q-1)}&0 \\
    0  &   P(x) \\
	\end{bmatrix}=:I_{m(q-1)}\oplus P(x).
  \]
Denote $P^\#(x)=x^nP(x^{-1})$ the {\em reversed polynomial}
obtained by reverting the order of the coefficients. We say that the
polynomials $P(x)$ and $Q(x)$ are {\em strongly equivalent} if
$P(x)\approx Q(x)$ and $P^\#(x) \approx Q^\#(x)$.  If the degree of
$Q(x)$ is 1 and $P(x)\approx Q(x)$  we say that $Q(x)$ is a
{\em linearization} of $P(x)$. Similarly, we say that $Q(x)$ is a
{\em strong linearization} if $Q(x)$ is strongly equivalent to $P(x)$ and
$\deg Q(x)=1$. If $Q(x)$ has degree $\ell$ we use the terms 
\emph{$\ell$-ification} and \emph{strong $\ell$-ification}.
\end{definition}

It is clear from the definition
that $P(x) \approx Q(x)$ implies $\det P(x) = \kappa \det Q(x)$ where $\kappa$
is some nonzero constant, but the
converse is not generally true. The equivalence property is actually
stronger because it preserves also the eigenstructure of the matrix
polynomial, and not only the eigenvalues. For a more in-depth view of
this subject see \cite{ellification}.

In the literature, a number of different linearizations have been
proposed.  The most known are probably the Frobenius and the Fiedler 
linearizations \cite{ddm10}.
One of them is, for example,

 \begin{equation}\label{eq:frob}
xA-B=x    \begin{bmatrix}
   I_m &        &     &  \\
       & \ddots &     &  \\
       &        & I_m &  \\
       &        &     & P_{n} \\
   \end{bmatrix}
-   \begin{bmatrix}
         &        &     & -P_0 \\
     I_m &        &     & -P_1 \\
         & \ddots &     & \vdots \\
         &        & I_m & -P_{n-1} \\
   \end{bmatrix},
 \end{equation}
 where $I_m$ denotes the identity matrix of size $m$.

\subsection{New contribution}
In this paper we provide a general way to transform a given $m\times
m$ matrix polynomial $P(x)=\sum_{i=0}^nP_i x^i$ of degree $n$ into a
strongly equivalent matrix polynomial $A(x)$ of lower degree $\ell$
and larger size endowed with a strong 
structure. The
technique relies on representing $P(x)$ with respect to a basis of
matrix polynomials of the form $C_i(x)=\prod_{j=1,\, j\ne i}^q
B_j(x)$, $i=1,\ldots, q$, where $B_i(x)$ such that
$\deg B_i(x) = d_i$ satisfy the following requirements:
\begin{enumerate}
  \item For every $i,j$, $B_i(x)B_j(x) - B_j(x)B_i(x) = 0$, i.e., 
  the $B_i(x)$ commute;
  \item $B_i(x)$ and $B_j(x)$ are right coprime for every $i \neq j$. This implies that there exist $\alpha_{i,j}(x)$, $\beta_{i,j}(x)$ appropriate matrix polynomials such that
  $B_i(x) \alpha_{i,j}(x) + B_j(x) \beta_{i,j}(x) = I$. 
\end{enumerate}

The above conditions are sufficient to obtain an $\ell$-ification. In order to provide a strong $\ell$-ification, we need the following additional assumptions
\begin{enumerate}
\item $\deg B_i(x)=d$ for $i=1,\ldots,q$;
\item $B_i^\#(x)$ and $B_j^\#(x)$ are right coprime for every $i \neq j$.
\end{enumerate}

According to the choice of the basis we arrive at different
$\ell$-ifications $A(x)$, where $\ell \ge \lceil n/q\rceil$ is
determined by the degree of the $B_i(x)$, represented as a $q\times q$
block diagonal matrix with $m\times m$ blocks plus a matrix of rank at
most $m$. 

Moreover, we provide an explicit version of right and left
eigenvectors of $A(x)$ in the general case.

An example of $\ell$-ification  $A(x)$ is given by
\[\begin{split}
&A(x)=\mc  D(x)+(e\otimes I_m)[W_1,\ldots,W_q],\\
&\mc D(x)=\diag(B_1(x),\ldots,B_q(x)),\quad 
   e=(1,\ldots,1)^t\in\mathbb C^q,\\
&B_i(x)=b_i(x)I_m,~~\hbox{for } i=1,\ldots,q-1,\\
&B_q(x)=b_q(x)P_n+s I_m,~~ d_q=\deg b_q(x),\\
&
W_i(x)\in\mathbb C[x]^{m\times m},\quad \deg W_i(x)<\deg B_i(x),
\end{split}
\]
where $b_1(x),\ldots, b_q(x)$ are pairwise co-prime monic polynomials of
degree $d_1,\ldots,d_q$, respectively, such that $n=d_1+\cdots
+d_q$, and  $s$ is such that $\lambda b_q(\xi) + s \neq 0$  
for any eigenvalue $\lambda$ of $P_n$ and for any root $\xi$ of $b_i(x)$
for $i=1,\ldots,q-1$.
The matrix polynomial $A(x)$ has degree $\ell= \max \{ d_1, \dots, d_q \} \ge \lceil \frac n q \rceil$ and
size $mq\times mq$. 

If $b_i(x)=x-\beta_i$ are linear polynomials
then $\deg A(x)=1$ and the above equivalence turns into a
strong linearization, moreover the eigenvalues of $P(x)$ can be viewed
as the generalized eigenvalues of the matrix pencil $A(x)$
{\small
\[
A(x) = x\begin{bmatrix}
 I\\
 & \ddots\\
& & I\\
&&&P_n
\end{bmatrix}
-
\begin{bmatrix}
\beta_1 I\\
 & \ddots\\
& &\beta_{n-1} I\\
&&& \beta_n P_n - s I
\end{bmatrix}+
\begin{bmatrix}I\\ \vdots \\ \vdots\\ I\end{bmatrix}
[W_1 \ldots W_n]
\]
}
where 
\begin{equation}\label{eq:introW}
 W_i = \begin{cases}
   \frac{P(\beta_i)} {\prod_{j=1,\, j\neq i} ^ {n-1} (\beta_i - \beta_j )}( 
     (\beta_i-\beta_n) P_n + s I_m)^{-1}  & \text{for } i < n,\\[2ex]
 \frac{ P(\beta_n)}{\prod_{j=1}^{n-1}(\beta_n-\beta_j)} - sI_m - s \sum_{j = 1}^{n-1} \frac{W_i}{\beta_n - \beta_j} & \text{otherwise}. \\
 \end{cases}
\end{equation}

If $P(x)$ is a scalar polynomial then $\det A(x)=\prod_{i=1}^n
(x-\beta_i)(\sum_{j=1}^n \frac{W_j}{x-\beta_j}+1)$ so that the
eigenvalue problem can be rephrased in terms of the secular equation
$\sum_{j=1}^n \frac{W_j}{x-\beta_j}+1=0$. Motivated by this fact, we 
will refer to this linearization as {\em secular linearization}.

Observe that this kind of linearization relies on the representation
of $P(x)-\prod_{i=1}^nB_i(x)$ in the Lagrange basis formed by
$C_i(x)=\prod_{j=1,\,j\ne i}^nB_j(x)$, $i=1,\ldots,n$ which is
different from the linearization given in \cite{acl} where the pencil
$A(x)$ has an arrowhead structure. Unlike the linearization of
\cite{acl}, our linearization does not introduce eigenvalues at
infinity.

This secular linearization has some advantages with respect to the
Frobenius linearization \eqref{eq:frob}.  For a monic matrix polynomial we show that with the
linearization obtained by choosing $\beta_i=\omega_n^i$, where
$\omega_n$ is a principal $n$th root of 1, our linearization
is
unitarily similar to the block Frobenius pencil associated with
$P(x)$. By choosing $\beta_i=\alpha\omega_n^i$, we obtain a pencil
unitarily similar to the scaled Frobenius one. With these choices,
the eigenvalues of the secular linearization have the same condition number as
the eigenvalues of the (scaled) Frobenius matrix.

This observation leads to better choices of the nodes $\beta_i$
performed according to the magnitude of the eigenvalues of $P(x)$. In
fact, by using the information provided by the tropical roots in the
sense of \cite{bns}, we may compute at a low cost particular values of
the nodes $\beta_i$ which greatly improve the condition number of the
eigenvalues.  From an experimental analysis we find that in most cases
the conditioning of the eigenvalues of the linearization obtained this
way is lower by several orders of magnitude with respect to the
conditioning of the eigenvalues of the Frobenius matrix even if it is
scaled with the optimal parameter.

Our experiments, reported in Section~\ref{sec:app} are based on some randomly generated polynomials and
on some problems taken from the repository NLEVP \cite{nlevp}.

We believe that the information about the tropical roots, used in
\cite{gs09} for providing better numerically conditioned problems, can
be more effectively used with our $\ell$-ification. This analysis is part of
our future work.

Another advantage of this representation is that any matrix in the
form ``diagonal plus low-rank'' can be reduced to 
Hessenberg form $H$ by means of Givens rotation with a low number of
arithmetic operations provided that the diagonal is real. Moreover,
the function $p(x)=\det(xI-H)$ as well as the Newton correction 
$p(x)/p'(x)$ can be computed
in $O(nm^2)$ operations \cite{inprep}. This fact can be used to implement the Aberth
iteration in $O(n^2m^3)$ ops instead of $O(nm^4+n^2m^3)$ of
\cite{noferini}.  This complexity bound seems optimal in the sense
that for each one of the $mn$ eigenvalues all the $m^2(n+1)$ data are
used at least once. 



The paper is organized as follows. In Section \ref{sec:different} we
provide the reduction of any matrix polynomial $P(x)$ to the
equivalent form 
\[
A(x)=\mc D(x)+(e\otimes I_m)[W_1(x),\ldots,W_q(x)],
\]
that is, the $\ell$-ification of $P(x)$.  In Section \ref{sec:strong}
we show that $P(x)$ is strongly equivalent to $A(x)$ in the sense of
Definition \ref{def:1}. In Section \ref{sec:eig} we provide the
explicit form of left and right eigenvectors of $A(x)$ relating them to the corresponding eigenvectors of $P(x)$. In Section
\ref{sec:particular} we analyze the case where $B_i(x)=b_i(x)I$ for
$i=1,\ldots,q-1$ and $B_q(x)=b_q(x)P_n+sI$ for scalar polynomials
$b_i(x)$. Section \ref{sec:comput} outlines an algorithm for computing
the $\ell$-ifications. Finally, in Section \ref{sec:app} we present
the results of some numerical experiments.

\section{A diagonal plus low rank $\ell$-ification}\label{sec:different}
Here we recall a known companion-like matrix for scalar polynomials
represented as a diagonal plus a rank-one matrix, provide a more
general formulation and then extend it to the case of matrix
polynomials. This form was introduced by B.T.~Smith in \cite{smith},
as a tool for providing inclusion regions for the zeros of a
polynomial, and used by G.H.~Golub in \cite{golub} in the analysis of
modified eigenvalue problems.

Let $p(x)=\sum_{i=0}^np_ix^i$ be a polynomial of degree $n$ with
complex coefficients, assume $p(x)$ monic, i.e., $p_n=1$, consider a
set of pairwise different complex numbers $\beta_1,\ldots,\beta_n$ and set
$e=(1,\ldots,1)^t$.  Then it holds that \cite{golub}, \cite{smith}

\begin{equation}\label{eq:secularlin}
\begin{split}
&p(x)=\det(xI-D+ew^t),\\
&D=\diag(\beta_1,\ldots,\beta_n),\quad  
  w=(w_i),~w_i = \frac{p(\beta_i)}{\prod_{j \neq i} (\beta_i - \beta_j)}.
\end{split}
\end{equation}

Now consider a monic polynomial $b(x)$ of degree $n$ factored as
$b(x)=\prod_{i=1}^q b_i(x)$, where $b_i(x)$, $i=1,\ldots, q$ are monic
polynomials of degree $d_i$ which are co-prime, that is,
$\gcd(b_i(x),b_j(x))=1$ for $i\ne j$, where $\gcd$ denotes the monic
greatest common divisor. Recall that given a pair $u(x),v(x)$ of
polynomials there exist unique polynomials $s(x),r(x)$ such that $\deg
s(x)<\deg v(x)$, $\deg r(x)<\deg u(x)$, and
$u(x)s(x)+v(x)r(x)=\gcd(u(x),v(x))$. From this property it follows
that if $u(x)$ and $v(x)$ are co-prime, there exists $s(x)$
such that $s(x)u(x)\equiv 1\mod v(x)$.  This polynomial can be viewed as the
reciprocal of $u(x)$ modulo $v(x)$. Here and hereafter we denote
$u(x)\mod v(x)$ the remainder of the division of $u(x)$ by $v(x)$.

This way, we may uniquely represent any polynomial of degree $n$ in terms of 
the generalized Lagrange polynomials $c_i(x)=b(x)/b_i(x)$,
$i=1,\ldots,q$ as follows.

\begin{lemma}\label{lem:1} Let $b_i(x)$, $i=1,\ldots, q$ be co-prime monic
polynomials such that $\deg b_i(x)=d_i$ and $b(x)=\prod_{i=1}^q
b_i(x)$ has degree $n$. Define $c_i(x)=b(x)/b_i(x)$.  
Then there exist polynomials $s_i(x)$
such that $s_i(x)c_i(x)\equiv 1\mod b_i(x)$, moreover, any monic polynomial
$p(x)$ of degree $n$ can be uniquely written as 
\begin{equation}\label{eq:1}
\begin{split}
&p(x)=b(x)+\sum_{i=1}^q
w_i(x)c_i(x),\\
& w_i(x)\equiv p(x)s_i(x)\mod b_i(x),\quad i=1,\ldots, q,
\end{split}
\end{equation}
where $\deg w_i(x)<d_i$.
\end{lemma}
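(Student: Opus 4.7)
The proof is essentially an application of the Chinese Remainder Theorem (CRT) in the ring $\mathbb{C}[x]$, so the plan is to set it up in those terms and then verify the explicit formula for $w_i(x)$.

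First, I would establish the existence of $s_i(x)$. Since $b_i(x)$ is coprime with each $b_j(x)$ for $j\neq i$, and the ideal generated by $b_i$ together with a product of elements coprime to it still generates the unit ideal, it follows that $b_i(x)$ is coprime with $c_i(x)=\prod_{j\neq i}b_j(x)$. By B\'ezout's identity in $\mathbb{C}[x]$ there exist polynomials $s_i(x),t_i(x)$ with $s_i(x)c_i(x)+t_i(x)b_i(x)=1$, hence $s_i(x)c_i(x)\equiv 1\pmod{b_i(x)}$. Reducing $s_i$ modulo $b_i$ if necessary, we may take $\deg s_i(x)<d_i$, and this $s_i(x)$ is then uniquely determined.

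Next, for the decomposition, I would observe that since $p(x)$ and $b(x)$ are both monic of degree $n$, the difference $p(x)-b(x)$ has degree strictly less than $n$. The natural ansatz is to \emph{define} $w_i(x):=p(x)s_i(x)\bmod b_i(x)$, which automatically satisfies $\deg w_i(x)<d_i$. Setting $r(x):=\sum_{i=1}^q w_i(x)c_i(x)$, note that $\deg r(x)<n$, since each summand has degree less than $d_i+(n-d_i)=n$. To see that $r(x)=p(x)-b(x)$, I would reduce the identity modulo each $b_i(x)$. For $j\neq i$, the polynomial $c_j(x)$ is divisible by $b_i(x)$, so $c_j(x)\equiv 0\pmod{b_i(x)}$, and therefore
\[
r(x)\equiv w_i(x)c_i(x)\equiv p(x)s_i(x)c_i(x)\equiv p(x)\pmod{b_i(x)},
\]
while $p(x)-b(x)\equiv p(x)\pmod{b_i(x)}$ since $b_i(x)\mid b(x)$. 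Thus $r(x)$ and $p(x)-b(x)$ agree modulo each $b_i(x)$, hence by CRT they agree modulo $b(x)=\prod b_i(x)$; since both have degree less than $n=\deg b(x)$, they must be equal. This proves the representation \eqref{eq:1}.

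Finally, I would address uniqueness. Suppose $\sum w_i(x)c_i(x)=\sum \widetilde w_i(x)c_i(x)$ with $\deg w_i,\deg\widetilde w_i<d_i$. Reducing modulo $b_i(x)$ gives $(w_i(x)-\widetilde w_i(x))c_i(x)\equiv 0\pmod{b_i(x)}$, and since $c_i(x)$ is invertible modulo $b_i(x)$ (its inverse being $s_i(x)$) we obtain $w_i(x)\equiv\widetilde w_i(x)\pmod{b_i(x)}$; the degree bound forces equality. The only subtle point in the whole argument is making sure the degree of $\sum w_i c_i$ is controlled well enough to conclude equality (not just congruence modulo $b$) against $p(x)-b(x)$; the bookkeeping that both sides live in the subspace of polynomials of degree less than $n$ is what makes the CRT isomorphism collapse to an actual equality in $\mathbb{C}[x]$.
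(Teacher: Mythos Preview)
Your proof is correct and follows essentially the same route as the paper's: define $w_i(x)=p(x)s_i(x)\bmod b_i(x)$, verify that $p(x)-b(x)-\sum_i w_i(x)c_i(x)$ vanishes modulo every $b_i(x)$ and hence (by CRT and the degree bound) vanishes identically, then argue uniqueness by reducing modulo $b_i(x)$ and using invertibility of $c_i(x)$ there. The only cosmetic difference is that you phrase the key step explicitly as a CRT isomorphism, while the paper states directly that the difference is a multiple of $\prod_i b_i(x)$ of degree less than $n$ and hence zero.
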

\begin{proof}
Since $\gcd(b_i(x),b_j(x))=1$ for $i\ne j$ then $b_i(x)$ and
$c_i(x)=\prod_{j\ne i}b_j(x)$ are co-prime.  Therefore there exists
$s_{i}(x)\equiv 1/c_i(x)\mod b_i(x)$. Moreover, setting
$w_i(x)\equiv p(x)s_i(x)\mod b_i(x)$ for $i=1,\ldots, q$, it turns out that
the equation $p(x)=b(x)+\sum_{i=1}^q w_i(x)c_i(x)$ is satisfied modulo
$b_i(x)$ for $i=1,\ldots,q$. For the primality of
$b_1(x),\ldots,b_q(x)$, this means that the polynomial
$\psi(x):=p(x)-b(x)-\sum_{i=1}^q w_i(x)c_i(x)$ is a multiple of
$\prod_{i=1}^q b_i(x)$ which has degree $n$. Since $\psi(x)$ has
degree at most $n-1$ it follows that $\psi(x)=0$. That is \eqref{eq:1}
provides a representation of $p(x)$. This representation is unique
since another representation, say,  given by $\tilde w_i(x)$, $i=1,\ldots,q$,
would be such that $\sum_{i=1}^q(\tilde w_i(x)- w_i(x))c_i(x)=0$,
whence $(\tilde w_i(x)- w_i(x))c_i(x)\equiv 0\mod b_i(x)$. That is, for the
co-primality of $b_i(x)$ and $c_i(x)$, the polynomial $\tilde
w_i(x)-w_i(x)$ would be multiple of $b_i(x)$.  The property
$\deg(b_i(x))<\deg(\tilde w_i(x)-w_i(x))$ implies that $\tilde
w_i(x)-w_i(x)=0$.
\end{proof}

The polynomial $p(x)$ in Lemma \ref{lem:1} can be represented by means
of the determinant of a (not necessarily linear) matrix polynomial as
expressed by the following result which provides a generalization of
\eqref{eq:secularlin}

\begin{theorem}\label{lem:2}
Under the assumptions of Lemma \ref{lem:1} we have
\[
p(x)=\det A(x),\quad A(x)=D(x)+e[w_1(x),\ldots,w_q(x)]
\]
for $D=\diag(b_1(x),\ldots,b_q(x))$ and $e=[1,\ldots,1]^t$.
\end{theorem}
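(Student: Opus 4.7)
The plan is to observe that $A(x)$ is a rank-one perturbation of the diagonal matrix polynomial $D(x)$ and to invoke the matrix determinant lemma in the field of rational functions $\mathbb{C}(x)$. Writing $v(x) = [w_1(x),\ldots,w_q(x)]^t$, we have $A(x) = D(x) + e\, v(x)^t$, and the matrix determinant lemma gives
\[
  \det A(x) \;=\; \det D(x)\,\bigl(1 + v(x)^t D(x)^{-1} e\bigr).
\]
This identity is valid as an equality in $\mathbb{C}(x)$ (both sides are rational functions, and the formula is purely algebraic over any commutative ring where $D(x)$ is invertible, hence over $\mathbb{C}(x)$ since $\det D(x)=\prod_i b_i(x)$ is not the zero polynomial).

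Next I would just carry out the two elementary evaluations. Since $D(x)$ is diagonal, $\det D(x) = \prod_{i=1}^q b_i(x) = b(x)$ and $D(x)^{-1} = \diag(b_1(x)^{-1},\ldots,b_q(x)^{-1})$, so
\[
  v(x)^t D(x)^{-1} e \;=\; \sum_{i=1}^q \frac{w_i(x)}{b_i(x)}.
\]
Multiplying through by $b(x)$ and using $b(x)/b_i(x) = c_i(x)$ gives
\[
  \det A(x) \;=\; b(x) + \sum_{i=1}^q w_i(x)\, c_i(x).
\]

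Finally, by Lemma~\ref{lem:1}, the right-hand side is precisely the unique representation of the monic polynomial $p(x)$ of degree $n$ in the basis $\{b(x), c_1(x),\ldots, c_q(x)\}$, hence equals $p(x)$. The identity $\det A(x) = p(x)$ then holds in $\mathbb{C}(x)$, and since both sides are polynomials, it holds as an identity in $\mathbb{C}[x]$.

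There is essentially no obstacle: the only thing to be careful about is that $D(x)^{-1}$ does not exist as a matrix polynomial, so the determinant lemma is applied over the field of rational functions and the conclusion is then transported back to $\mathbb{C}[x]$ by the observation that both sides of the resulting identity are polynomials. One could alternatively avoid rational functions altogether by clearing denominators, i.e., by verifying directly that $b(x)\cdot(1+\sum_i w_i(x)/b_i(x))$ is the Laplace expansion of $\det A(x)$ along the rank-one correction, but the rational-function argument is the cleanest.
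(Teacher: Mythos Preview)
Your proof is correct and follows essentially the same route as the paper: the paper also factors $A(x)=D(x)(I+D(x)^{-1}e\,w^t)$ ``formally'' (i.e., over $\mathbb{C}(x)$), applies the rank-one determinant identity $\det(I+uv^t)=1+v^tu$, and then invokes Lemma~\ref{lem:1}. Your added remark that the resulting identity, a priori in $\mathbb{C}(x)$, lies in $\mathbb{C}[x]$ because both sides are polynomials makes explicit what the paper's word ``formally'' leaves implicit.
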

\begin{proof}
Formally, one has $A(x)=D(x)(I+D(x)^{-1}e[w_1(x),\ldots,w_q(x)])$ so
that 
\[
\begin{split}\det A(x)&=\det
D(x)\det(I_{q}+D(x)^{-1}e[w_1(x),\ldots,w_q(x)])\\
&=b(x)(1+
[w_1(x),\ldots,w_q(x)]D(x)^{-1}e),
\end{split}
\]
 where $b(x)=\prod_{i=1}^q b_i(x)$.
Whence, we find that $\det A(x)=b(x)+\sum_{i=1}^q w_i(x)c_i(x)=p(x)$,
where the latter equality holds in view of Lemma \ref{lem:1}.
\end{proof}

Observe that for $d_i=1$ the above result reduces to
\eqref{eq:secularlin} where $w_i$ are constant polynomials.  From the
computational point of view, the polynomials $w_i(x)$ are obtained by
performing a polynomial division since $w_i(x)$ is the remainder of
the division of $p(x)s_i(x)$ by $b_i(x)$.  

\subsection{Strong $\ell$-ifications for matrix polynomials} \label{sec:strong}
The following technical Lemma is needed to prove the next Theorem
\ref{thm:3}.

\begin{lemma} \label{lem:unimodular_f1}
  Let $B_1(x), B_2(x) \in \mpol m$ be
  regular and such that
  $B_1(x)B_2(x) = B_2(x)B_1(x)$. Assume that $B_1(x)$ and $B_2(x)$ are right co-prime, that is,
  there exist $\alpha(x), \beta(x) \in \mpol m$ such that
  $
    B_1(x) \alpha(x) + B_2(x) \beta(x) = I_m.
  $
  Then the $2 \times 2$ block-matrix polynomial
  $
    F(x) = \left[\begin{smallmatrix}
      \alpha(x) & B_2(x) \\
      -\beta(x) & B_1(x) \\
    \end{smallmatrix}\right]
  $
  is unimodular. 
\end{lemma}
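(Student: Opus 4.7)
The plan is to show directly that $\det F(x)$ is the constant polynomial $1$, which establishes unimodularity. Since verifying this on a cofinite subset of $\mathbb C$ suffices (a polynomial agreeing with $1$ at infinitely many points is identically $1$), I would restrict attention to values $x_0\in\mathbb C$ at which $B_1(x_0)$ is invertible. Such $x_0$ form a cofinite set because $B_1(x)$ is regular, so $\det B_1(x)$ is a nonzero scalar polynomial with only finitely many roots.

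For such an $x_0$, the $(2,2)$ block of $F(x_0)$ is invertible and the Schur complement formula gives
\[
\det F(x_0) \;=\; \det B_1(x_0)\cdot \det\bigl(\alpha(x_0) - B_2(x_0) B_1(x_0)^{-1}(-\beta(x_0))\bigr)
\;=\;\det B_1(x_0)\cdot\det\bigl(\alpha(x_0)+B_2(x_0)B_1(x_0)^{-1}\beta(x_0)\bigr).
\]
The core of the argument is then simplifying the second determinant using commutativity together with the B\'ezout identity.

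The key observation is that $B_1(x)B_2(x)=B_2(x)B_1(x)$ implies, after left- and right-multiplying by $B_1(x_0)^{-1}$, that $B_2(x_0) B_1(x_0)^{-1} = B_1(x_0)^{-1} B_2(x_0)$. Left-multiplying the relation $B_1(x_0)\alpha(x_0)+B_2(x_0)\beta(x_0)=I_m$ by $B_1(x_0)^{-1}$ yields $\alpha(x_0)+B_1(x_0)^{-1}B_2(x_0)\beta(x_0)=B_1(x_0)^{-1}$, which by the commutation just established is exactly $\alpha(x_0)+B_2(x_0)B_1(x_0)^{-1}\beta(x_0)=B_1(x_0)^{-1}$. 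Substituting into the Schur formula gives $\det F(x_0)=\det B_1(x_0)\cdot\det B_1(x_0)^{-1}=1$.

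Since $\det F(x)$ is a scalar polynomial agreeing with $1$ on the cofinite set where $B_1(x)$ is invertible, it is identically $1$, and $F(x)$ is unimodular. The main subtlety to handle carefully is the non-commutativity of the matrix polynomial ring: the commutativity hypothesis on $B_1$ and $B_2$ is used precisely to transfer $B_1^{-1}$ through $B_2$, which is what makes the B\'ezout identity collapse the Schur complement to $B_1^{-1}$. If one preferred an approach avoiding the division into generic/non-generic points, an alternative would be to write down an explicit polynomial inverse for $F(x)$ by combining the B\'ezout coefficients with the commutation relation, but the determinant route above is shorter and makes the role of each hypothesis transparent.
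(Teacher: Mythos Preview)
Your proof is correct. The paper takes a different, slightly slicker route: it left-multiplies $F(x)$ by the block matrix $\left[\begin{smallmatrix} I_m & 0 \\ B_1(x) & -B_2(x) \end{smallmatrix}\right]$ and observes, using the B\'ezout identity for the $(2,1)$ block and the commutativity $B_1B_2=B_2B_1$ for the $(2,2)$ block, that the product is $\left[\begin{smallmatrix} \alpha(x) & B_2(x) \\ I_m & 0 \end{smallmatrix}\right]$. Taking determinants of both sides and cancelling $\det B_2(x)$ (nonzero by regularity of $B_2$) gives $\det F(x)=1$ directly as a polynomial identity.

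The difference is mainly stylistic. The paper's factorization argument stays entirely within the polynomial ring and never specializes to numerical values of $x$, so it would work verbatim over any integral domain; it also uses the regularity of $B_2$ rather than $B_1$. Your Schur-complement argument is perhaps more transparent about where each hypothesis enters, but relies on the ``generic point'' device (evaluate on a cofinite set, then conclude for the polynomial), which is perfectly fine over $\mathbb{C}$ but is an extra layer compared to the paper's purely algebraic identity. Either way the essential content is the same: commutativity lets you push $B_1^{-1}$ past $B_2$ so that the B\'ezout relation collapses the relevant $m\times m$ block.
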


\begin{proof} 
From the decomposition
  \[
    \begin{bmatrix}
      I_m & 0 \\
      B_1(x) & -B_2(x) \\
    \end{bmatrix}
    \begin{bmatrix} 
    \alpha(x) &  B_2(x)  \\
    -\beta(x) &   B_1(x) \\
    \end{bmatrix} = 
    \begin{bmatrix}
       \alpha(x) &B_2(x) \\
       I_m  & 0\\
    \end{bmatrix}
  \]
 we have $-\det B_2(x)\det F(x)=-\det B_2(x)$. Since $B_2(x)$ is regular then $\det F(x)=1$.
\end{proof}

\begin{lemma}\label{lem:primeproduct}
  Let $P(x), Q(x)$ and $T(x)$ be pairwise commuting right co-prime matrix
  polynomials. Then $P(x)Q(x)$ and $T(x)$ are also right
  co-prime. 
\end{lemma}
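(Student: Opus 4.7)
The plan is to construct explicit Bezout coefficients for the pair $(P(x)Q(x), T(x))$ directly from the two given Bezout identities. Since $P$ and $T$ are right co-prime, there exist $\alpha_1,\beta_1\in\mpol m$ with $P\alpha_1 + T\beta_1 = I$; similarly, from the right co-primality of $Q$ and $T$, there exist $\alpha_2,\beta_2\in\mpol m$ with $Q\alpha_2 + T\beta_2 = I$.

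The key step is to multiply the first identity on the left by $Q$. Using the commutativity of $Q$ with $P$ and with $T$ (and only this, not any commutativity involving the Bezout coefficients), one obtains
\[
  Q \;=\; QP\alpha_1 + QT\beta_1 \;=\; PQ\alpha_1 + TQ\beta_1.
\]
Substituting this expression for $Q$ into the second Bezout identity $Q\alpha_2 + T\beta_2 = I$ yields
\[
  PQ(\alpha_1\alpha_2) + T(Q\beta_1\alpha_2 + \beta_2) \;=\; I,
\]
which is a Bezout identity exhibiting $PQ$ and $T$ as right co-prime, with explicit coefficients $\gamma = \alpha_1\alpha_2$ and $\delta = Q\beta_1\alpha_2 + \beta_2$.

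I do not expect any genuine obstacle: the commutativity hypothesis is deployed exactly at the two places where $Q$ must be moved past $P$ and past $T$, and nothing else is required of the Bezout coefficients. The same argument, applied iteratively, immediately extends to show that an arbitrary product $B_{i_1}(x)\cdots B_{i_k}(x)$ of pairwise commuting, pairwise right co-prime blocks is right co-prime with any other $B_j(x)$ from the family, which is the form in which the lemma is used later in the paper.
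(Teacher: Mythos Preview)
Your proof is correct and follows essentially the same approach as the paper: both construct explicit Bezout coefficients for $(PQ,T)$ by combining the two given Bezout identities and using the commutativity hypothesis to move one factor across. The only cosmetic difference is that the paper uses the single commutation $TP=PT$ to obtain $\alpha=\alpha_Q\alpha_P$, $\beta=P\beta_Q\alpha_P+\beta_P$, whereas you use $QP=PQ$ and $QT=TQ$ to obtain the symmetric variant $\gamma=\alpha_1\alpha_2$, $\delta=Q\beta_1\alpha_2+\beta_2$; both are equally valid under the pairwise-commuting assumption.
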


\begin{proof}
  We know that there exists $\alpha_P(x), \beta_P(x), \alpha_Q(x), \beta_Q(x)$, matrix polynomials such that
  \[
    P(x)\alpha_P(x) + T(x)\beta_P(x) = I, \quad
    Q(x)\alpha_Q(x) + T(x)\beta_Q(x) = I. 
  \]
  We shall prove that there exist appropriate $\alpha(x), \beta(x)$
  matrix polynomials
  such that $P(x)Q(x)\alpha(x) + T(x)\beta(x) = I$. We have
  \begin{align*}
    &P(x)Q(x) ( \alpha_Q(x) \alpha_P(x) ) + 
    T(x) (P(x) \beta_Q(x) \alpha_P(x)+ \beta_P(x)) = \\
    &P(x) ( Q(x)\alpha_Q(x) + T(x) \beta_Q(x) ) \alpha_P(x) + 
    T(x) \beta_P(x) = \\
    &P(x) \alpha_P(x) + T(x) \beta_P(x) = I,
  \end{align*}
  where the first equality holds since $T(x)P(x) = P(x)T(x)$. 
  So we can conclude that also $P(x)Q(x)$ and $T(x)$ are
  right coprime, and a possible choice for $\alpha(x)$ and $\beta(x)$
  is:
  \[
    \alpha(x) = \alpha_Q(x) \alpha_P(x), \qquad
    \beta(x) = P(x)\beta_Q(x) \alpha_P(x) + \beta_P(x). 
  \]
\end{proof}

Now we are ready to prove the main result of this section, 
which provides an $\ell$-ification of a matrix polynomial
$P(x)$ which is not generally strong. Conditions under
which this $\ell$-ification is strong are given
in Theorem~\ref{thm:stronglin}.

\begin{theorem}\label{thm:3}
  Let $P(x) = \sum_{i = 0}^n P_i x^i$, $B_1(x),
\ldots, B_q(x)$, and $W_1(x), \ldots, W_q(x)$ be polynomials in
$\mpol m$. 
Let  $C_i(x) = \prod_{\substack{j = 1,\,}{j \neq i}}^q
B_j(x)$ and suppose that the following conditions hold:
\begin{enumerate}
 \item \label{cond:1} 
    $P(x) = \prod_{i = 1}^q B_i(x) + \sum_{i = 1}^q W_i(x) C_i(x)$; 
 \item \label{cond:2} the polynomials $B_i(x)$ are regular, commute, i.e., $B_i(x)B_j(x) -
   B_j(x)B_i(x) = 0$ for any $i,j$, and are pairwise
   right co-prime.
\end{enumerate}
Then the matrix polynomial $A(x)$ defined as 
\begin{equation}\label{eq:A(x)}
 A(x) = D(x) + (e\otimes I_m) [ W_1(x), \ldots, W_q(x) ], ~
   D(x) = \diag(B_1(x), \ldots, B_q(x))
\end{equation}
is equivalent to $P(x)$, i.e., there exist unimodular
$q\times q$ matrix polynomials $E(x)$, $F(x)$ such that
$E(x)A(x)F(x)=I_{m(q-1)}\oplus P(x)$.
\end{theorem}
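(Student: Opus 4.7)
The plan is to argue by induction on $q$. When $q = 1$, Condition~\ref{cond:1} reduces to $P(x) = B_1(x) + W_1(x) = A(x)$, so taking $E(x) = F(x) = I_m$ works.

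For the inductive step I perform three structural operations on $A(x)$. \emph{First}, I left-multiply by the unimodular matrix $E_1(x)$ that subtracts the first block row from each other block row: the first block row is left as $[B_1{+}W_1,\,W_2,\,\ldots,\,W_q]$, while the $i$-th block row for $i > 1$ becomes $[-B_1,\,0,\,\ldots,\,B_i,\,\ldots,\,0]$ with $B_i$ in column $i$. \emph{Second}, using the right coprimality $B_1\alpha + B_2\beta = I_m$, I extend the $2\times 2$ matrix $\bigl[\begin{smallmatrix}\alpha & B_2 \\ -\beta & B_1\end{smallmatrix}\bigr]$ (unimodular by Lemma~\ref{lem:unimodular_f1}) to a unimodular $F_2(x)$ acting as the identity on columns $3,\ldots,q$, and right-multiply by it. Exploiting the commutation $B_1 B_2 = B_2 B_1$, this replaces the $(2,1)$ block by $-I_m$, the $(2,2)$ block by $0$, and the $(1,2)$ block by $P_{12}(x) := B_1B_2 + W_1B_2 + W_2B_1$. \emph{Third}, I use the $-I_m$ in position $(2,1)$ to eliminate the remaining entries of the first block column via an additional unimodular left-multiplication; since row~2 now has zero blocks in columns $2, \ldots, q$, the other columns are unaffected.

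After a block permutation that moves row~2 and column~1 to a corner, the resulting matrix takes the form $(-I_m)\oplus A'(x)$, where $A'(x)$ is a $(q-1)\times(q-1)$ block matrix. The crucial observation is that $A'(x)$ equals $E_1'(x)\,\tilde A(x)$, where $E_1'(x)$ is the row-reduction matrix analogous to $E_1(x)$ at level $q-1$, and
\[
\tilde A(x) = \tilde D(x) + (e\otimes I_m)[\tilde W_1(x),\ldots,\tilde W_{q-1}(x)]
\]
is built from $\tilde B_1 := B_1 B_2$, $\tilde B_i := B_{i+1}$ for $i\geq 2$, $\tilde W_1 := W_1 B_2 + W_2 B_1$, and $\tilde W_i := W_{i+1}$ for $i \geq 2$. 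Lemma~\ref{lem:primeproduct} ensures that $\{\tilde B_i\}$ is pairwise commuting and right coprime; moreover $\prod_{i=1}^{q-1}\tilde B_i + \sum_{i=1}^{q-1}\tilde W_i \tilde C_i = \prod_{i=1}^q B_i + \sum_{i=1}^q W_i C_i = P(x)$ (with $\tilde C_i := \prod_{j\neq i}\tilde B_j$) by direct expansion, so the hypotheses of the theorem hold at level $q-1$. The induction therefore gives $\tilde A \approx I_{m(q-2)}\oplus P$, hence $A'\approx I_{m(q-2)}\oplus P$ since $E_1'$ is unimodular, and finally $A \approx I_{m(q-1)}\oplus P$ after absorbing the sign of $-I_m$.

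The main obstacle is the bookkeeping in the second step: right-multiplication by $F_2$ also modifies the rows $i > 2$, introducing $-B_1 B_2$ in their column~$2$ entries. Checking that the reduced $(q-1)\times(q-1)$ submatrix $A'(x)$ really matches $E_1'(x)\,\tilde A(x)$ amounts to matching the $(i,j)$ blocks one at a time, which follows from commutativity of the $B_i$'s together with the definitions of $\tilde B_1$ and $\tilde W_1$.
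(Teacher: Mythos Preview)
Your proof is correct and follows essentially the same strategy as the paper: both arguments use Lemma~\ref{lem:unimodular_f1} to build a unimodular right factor that merges $B_1,B_2$ into $B_1B_2$ (and $W_1,W_2$ into $W_1B_2+W_2B_1$), invoke Lemma~\ref{lem:primeproduct} to restore the coprimality hypothesis, and then reduce to a smaller instance. The only differences are cosmetic---you subtract the first row from the others and set up a formal induction on $q$, whereas the paper subtracts adjacent rows (putting the $W_i$ in the last block row rather than the first) and phrases the recursion iteratively.
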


 \begin{proof} 
Define $E_0(x)$ the following (constant) matrix: 
   \[
     E_0(x) = \begin{bmatrix} 
       I_m & -I_m \\
         & I_m  & -I_m \\
         &    & \ddots  & \ddots \\
         &    &         & I_m & -I_m \\
         &    &         &   & I_m \\
	\end{bmatrix}.
   \]
A direct inspection shows that
   \[
     E_0(x) A(x) = \begin{bmatrix}
       B_1(x) & - B_2(x) \\
                    & B_2(x)   & -B_3(x) \\
                    &                & \ddots  & \ddots \\
                    &                &         & B_{q-1}(x) & -B_q(x) \\
       W_1(x)          &  W_2(x)          & \cdots  & W_{q-1}(x)     & B_q(x) + W_q(x) \\
	\end{bmatrix}. 
   \]
   Using the fact that the polynomials $B_i(x)$ are right co-prime, 
   we transform the latter matrix into block diagonal form. 
   We start by cleaning $B_1(x)$. Since $B_1(x), B_2(x)$ are right co-prime,
   there exist polynomials $\alpha(x)$, $\beta(x)$ such that
   $
     B_1(x) \alpha(x) + B_2(x) \beta(x) = I_m. 
   $
   For the sake of brevity, from now on we write $\alpha,\beta$, $W_i$ and $B_i$ in place of $\alpha(x),\beta(x)$, $W_i(x)$ and $B_i(x)$, respectively. 
Observe that the matrix
   \[
     F_1(x) = \begin{bmatrix} 
       \alpha & B_2 \\
       -\beta & B_1 \\
	\end{bmatrix} \oplus I_{m(q-2)}.
   \]
   is unimodular in view of Lemma~\ref{lem:unimodular_f1}, moreover
   \[
     E_0(x) A(x) F_1(x) = \begin{bmatrix}
       I_m & \\
       -B_2 \beta   & B_1B_2   & -B_3 \\
                    &                & \ddots  & \ddots \\
                    &                &         & B_{q-1} & -B_q \\
       W_1 \alpha - W_2 \beta  & W_1B_2 + W_2B_1  & \cdots  & W_{m-1}     & B_q + W_q \\
	\end{bmatrix}. 
   \]
   Using row operations we transform to zero all the elements in the first
   column of this matrix (by just adding multiples of the first row to
   the others). That is, there exists a suitable unimodular matrix $E_1(x)$ such
   that
   \[
	 E_1(x) E_0(x) A(x) F_1(x) = \begin{bmatrix}
       I_m & \\
                    & B_1B_2    & -B_3  \\
                    &                & \ddots  & \ddots \\
                    &                &         & B_{q-1} & -B_q \\
                    & W_2B_1 + W_1B_2  & \cdots  & W_{q-1}     & 
                      B_q  + W_q \\
	\end{bmatrix}.      
   \]
   In view of Lemma \ref{lem:primeproduct}, $B_1B_2$ is right coprime
   with $B_3$. Thus, we can recursively apply the same process until
   we arrive at the final reduction step: {
\small \[
     E_{q-1}(x) \ldots E_0(x) A(x) F_1(x)\ldots F_{q-1}(x) =
 I_{m(q-1)}\oplus \left( \prod_{i=1}^q B_i(x)+\sum_{i = 1}^q 
                                     W_i(x)C_i(x) \right)
\]
   }
   where the last diagonal block is exactly
   $P(x)$ in view of assumption \ref{cond:1}. 
\end{proof}

Observe that if $m=1$  and $B_i(x) = x
- \beta_i$ then we find that  \eqref{eq:secularlin}
provides the secular linearization for $P(x)$.

Now, we can prove that the polynomial equivalence that we have
 just presented is actually a strong equivalence, under the following additional assumptions
\begin{equation}\label{eq:addass}
\begin{split}
&\deg B_i(x)=d, \quad i=1,\ldots,q,\\
& n=dq,\quad \deg W_i(x)< \deg B_i(x)\\
& B_i^\#(x), \hbox{ are pairwise right co-prime}.
\end{split}
\end{equation}

\begin{remark}
  Recall that, according to Theorem 7.5 of \cite{ellification}
  in an algebraically closed field there exists a strong
  $\ell$-ification for a degree $n$ 
  regular matrix polynomial $P(x) \in \mathbb{C}^{m \times m}[x]$
  if and only if $\ell | nm$. Conditions~\eqref{eq:addass}
  satisfy this requirement, since $\ell = d$ and $d | n$. 
\end{remark}

 To accomplish this task we will show that the reversed matrix polynomial of
 $A(x)=D(x)+(e\otimes I_m)W$, $W=[W_1,\ldots,W_q]$,
 has the same structure as $A(x)$ itself.
  
 \begin{theorem} \label{thm:stronglin}
Under the assumptions of Theorem \ref{thm:3} and of \eqref{eq:addass}
   the secular $\ell$-ification given in Theorem~\ref{thm:3} is strong.
 \end{theorem}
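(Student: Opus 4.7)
The plan is to show that $A^\#(x)$ retains exactly the same ``block diagonal plus rank-$m$'' structure as $A(x)$, with each $B_i(x)$ replaced by $B_i^\#(x)$ and each $W_i(x)$ by $\widetilde W_i(x) := x^d W_i(x^{-1})$. Once this is established, Theorem~\ref{thm:3} can be applied a second time---to $A^\#(x)$ viewed as an $\ell$-ification of $P^\#(x)$---yielding $A^\#(x) \approx P^\#(x)$. Combined with the already-proven $A(x) \approx P(x)$, this gives the strong equivalence in the sense of Definition~\ref{def:1}.

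First I would observe that the assumptions $\deg B_i(x) = d$ and $\deg W_i(x) < d$ force $\deg A(x) = d$ exactly: the coefficient of $x^d$ in $A(x)$ is the block diagonal matrix of leading coefficients of the $B_i(x)$, which is nonzero, and no off-diagonal contribution of degree $d$ comes from the $W_i$. Hence $A^\#(x) = x^d A(x^{-1})$ is well-defined, and using $x^d B_i(x^{-1}) = B_i^\#(x)$ for each $i$, a direct computation yields
\[
  A^\#(x) = \diag(B_1^\#(x),\ldots,B_q^\#(x)) + (e \otimes I_m)[\widetilde W_1(x),\ldots,\widetilde W_q(x)].
\]
Similarly, multiplying the identity $P(x) = \prod_i B_i(x) + \sum_i W_i(x) C_i(x)$ by $x^n$, substituting $x \mapsto x^{-1}$, and using $n = dq$ together with $\deg C_i = n-d$, one obtains
\[
  P^\#(x) = \prod_{i=1}^q B_i^\#(x) + \sum_{i=1}^q \widetilde W_i(x)\, C_i^\#(x), \qquad C_i^\#(x) = \prod_{j \neq i} B_j^\#(x),
\]
which is precisely the analogue of condition~\ref{cond:1} of Theorem~\ref{thm:3} for the reversed data.

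Next I would verify the remaining hypotheses of Theorem~\ref{thm:3} for the $B_i^\#(x)$: regularity is immediate from the regularity of the $B_i(x)$ together with $\deg B_i = d$; commutativity $B_i^\#(x) B_j^\#(x) = B_j^\#(x) B_i^\#(x)$ follows from $B_i(x^{-1}) B_j(x^{-1}) = B_j(x^{-1}) B_i(x^{-1})$ after multiplication by $x^{2d}$; and pairwise right co-primality of the $B_i^\#$ is exactly the additional assumption imposed in~\eqref{eq:addass}. Applying Theorem~\ref{thm:3} to the reversed data then delivers $A^\#(x) \approx P^\#(x)$, completing the proof. The main obstacle is the careful degree-bookkeeping: one must check that $\deg A(x) = d$ exactly (so the exponent in the definition of $A^\#$ is correct), that the reversal distributes cleanly across the product $\prod_i B_i(x)$ (which relies on $n = dq$), and that $\deg W_i < d$ ensures $\widetilde W_i$ is a well-defined matrix polynomial of degree at most $d$. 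Once these checks are in place, the theorem reduces to a symmetry argument that invokes Theorem~\ref{thm:3} twice.
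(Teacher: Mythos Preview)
Your proposal is correct and follows essentially the same approach as the paper's own proof: both recognize that $A^\#(x)$ has the identical ``block diagonal plus rank-$m$'' structure as $A(x)$ with the $B_i$ replaced by $B_i^\#$ and the $W_i$ by $x^d W_i(x^{-1})$, verify that the reversed blocks still satisfy the hypotheses of Theorem~\ref{thm:3}, and invoke that theorem a second time to obtain $A^\#(x)\approx P^\#(x)$. Your write-up is in fact slightly more careful than the paper's in making the degree bookkeeping explicit (checking $\deg A(x)=d$ exactly, that $n=dq$ makes the reversal distribute over the product, and that $\widetilde W_i$ is a genuine polynomial), but the underlying argument is the same.
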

 
 \begin{proof} 
 Consider  $A^\#(x)=x^dA(x^{-1})$. We have
 \[
   A^\#(x) = \diag(
     x^dB_1(x^{-1}),\ldots, x^dB_q(x^{-1})) + (e\otimes I_m) [ x^dW_1(x^{-1}) , \cdots, x^d W_q(x^{-1}) ].
 \]
 This matrix polynomial is already in the same form as $A(x)$ of \eqref{eq:A(x)} and verifies the assumptions of Theorem \ref{thm:3} since the polynomials
 $x^dB_1(x^{-1})$ are pairwise right co-prime and commute
 with each other in view of equation \eqref{eq:addass}.
Thus, Theorem \ref{thm:3} implies that $A^\#(x)$ is an $\ell$-ification for 
\[
\begin{split}
\prod_{i=1}^q x^dB_i(x^{-1}) &+\sum_{i=1}^q x^dW_i(x^{-1})\prod_{j\ne i}x^dB_j(x^{-1})\\
&=
x^{dq}\left(\prod_{i=1}^q B_i(x^{-1})+\sum_{i=1}^q W_i(x^{-1})\prod_{j\ne i}B_j(x^{-1})  \right)\\
&=x^nP(x^{-1})=P^\#(x),
\end{split}\]
where the first equality follows from the fact that $n=dq$ in view of equation \eqref{eq:addass}. This concludes the proof.
\end{proof}
 
 \begin{remark}
   Note that in the case where the $B_i(x)$ do not have the same
   degree, the secular $\ell$-ification might not be strong: the finite
   eigenstructure is preserved but some infinite eigenvalues not
   present in $P(x)$ will be artificially introduced.
 \end{remark}

 \section{Eigenvectors}\label{sec:eig}
In this section we provide an explicit expression of right and left
eigenvectors of the matrix polynomial $A(x)$.

 \begin{theorem} \label{thm:righteigenvec}
   Let $P(x)$ be a matrix polynomial, $A(x)$ its secular
   $\ell$-ification defined in Theorem \ref{thm:3}, $\lambda\in\mathbb
   C$ such that $\det P(\lambda)=0$, and assume that $\det
   B_i(\lambda)\ne 0$ for all $i = 1, \ldots, q$.  
 If $v_A=(v_1^t,\ldots,v_q^t)^t\in\mathbb C^{mq}$ is such that
 $A(\lambda)v_A=0$, $v_A\ne 0$ then $P(\lambda)v=0$ where
 $v=-\prod_{i=1}^qB_i(\lambda)^{-1}\sum_{j=1}^q W_jv_j\ne 0$. Conversely, if
$v\in\mathbb C^m$ is a nonzero vector such that $P(\lambda)v=0$, then the vector
$v_A$ defined by $v_i=\prod_{j \neq i} B_j(\lambda)v$, $i=1,\ldots,q$ is nonzero and such that
$A(\lambda)v_A=0$.
 \end{theorem}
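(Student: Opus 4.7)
Plan. The proof should be a direct computation using the structural identity
\(P(x) = \prod_{i=1}^q B_i(x) + \sum_{j=1}^q W_j(x) C_j(x)\) from hypothesis \ref{cond:1} of Theorem~\ref{thm:3}, together with the commutativity of the $B_i(\lambda)$. Let $C(\lambda):=\prod_{i=1}^q B_i(\lambda)$ so that $C(\lambda)=B_j(\lambda)C_j(\lambda)=C_j(\lambda)B_j(\lambda)$, and $C_j(\lambda)C(\lambda)^{-1}=B_j(\lambda)^{-1}$; all these manipulations are legitimate because the $B_i(\lambda)$ commute and are invertible by hypothesis. I will use these identities repeatedly.

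For the forward direction, I first read off the block rows of $A(\lambda)v_A=0$. The $i$-th block row says $B_i(\lambda)v_i+\sum_{j=1}^q W_j(\lambda)v_j=0$. Setting $w:=-\sum_{j=1}^q W_j(\lambda)v_j$, this becomes $B_i(\lambda)v_i=w$ for every $i$, hence $v_i=B_i(\lambda)^{-1}w$. Consequently the candidate vector is $v=C(\lambda)^{-1}w$. Plugging this into $P(\lambda)$ via the representation above gives
\[
P(\lambda)v = C(\lambda)\,C(\lambda)^{-1}w + \sum_{j=1}^q W_j(\lambda)\,C_j(\lambda)\,C(\lambda)^{-1}w
            = w + \sum_{j=1}^q W_j(\lambda)B_j(\lambda)^{-1}w
            = w + \sum_{j=1}^q W_j(\lambda)v_j = w - w = 0.
\]
For nonvanishing I argue by contradiction: if $w=0$, then every $v_i=B_i(\lambda)^{-1}w=0$, contradicting $v_A\ne 0$; since $w\ne 0$ and $C(\lambda)$ is invertible, $v=C(\lambda)^{-1}w\ne 0$.

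For the converse I proceed symmetrically. Define $v_i:=C_i(\lambda)v$. Using $B_i(\lambda)C_i(\lambda)=C(\lambda)$ (commutativity) I get $B_i(\lambda)v_i=C(\lambda)v$ for every $i$. On the other hand,
\[
\sum_{j=1}^q W_j(\lambda)v_j = \sum_{j=1}^q W_j(\lambda)C_j(\lambda)v = \bigl(P(\lambda)-C(\lambda)\bigr)v = -C(\lambda)v,
\]
so each block row $B_i(\lambda)v_i+\sum_j W_j(\lambda)v_j$ equals $C(\lambda)v-C(\lambda)v=0$, i.e.\ $A(\lambda)v_A=0$. Since $v\ne 0$ and $C_i(\lambda)$ is invertible (product of invertible commuting factors), at least one $v_i=C_i(\lambda)v$ is nonzero, so $v_A\ne 0$.

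I do not anticipate a serious obstacle: the proof is essentially an algebraic identity. The only delicate point is bookkeeping of the order of factors, since the $B_i(\lambda)$ commute among themselves but need not commute with the $W_j(\lambda)$. The computations above are arranged so that every time a $B_i(\lambda)^{-1}$ meets a $C_j(\lambda)$ or $C(\lambda)$ the commutativity is used only among the $B_i$'s, which is exactly what hypothesis \ref{cond:2} of Theorem~\ref{thm:3} guarantees.
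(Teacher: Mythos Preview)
Your proof is correct and follows essentially the same approach as the paper: both read off the block-row equations $B_i(\lambda)v_i+\sum_j W_j(\lambda)v_j=0$, solve for $v_i$ using invertibility of $B_i(\lambda)$, and then substitute into the representation $P(\lambda)=\prod_i B_i(\lambda)+\sum_j W_j(\lambda)C_j(\lambda)$ to verify $P(\lambda)v=0$, with the nonvanishing argument by contradiction being identical. Your write-up is in fact slightly more complete than the paper's, since you spell out the converse direction explicitly (the paper dismisses it with ``Similarly, we can prove the opposite implication'').
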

 
\begin{proof}
Let $v_A\ne 0$ be such that $A(\lambda)v_A=0$, so that 
\begin{equation}\label{eq:th1}
B_i(\lambda)v_i+\sum_{j=1}^q W_j(\lambda)v_j=0,\quad i=1,\ldots,q.
\end{equation}
Let $v=-(\prod_{i=1}^q B_i(\lambda)^{-1})\sum_{j=1}^qW_j(\lambda)v_j$. Combining the latter equation and \eqref{eq:th1} yields
\begin{equation}\label{eq:thm3.1}
v_i=-B_i(\lambda)^{-1}\left(\sum_{j=1}^q W_j(\lambda)v_j\right)=\prod_{j=1,\,j\ne i}^q B_j(\lambda)v.
\end{equation}
Observe that if $v=0$ then, by definition of $v$, one has $\sum_{j=1}^q W_j(\lambda)v_j=0$ so that,
in view of \eqref{eq:th1}, we find that $B_i(\lambda)v_i=0$. Since $\det B_i(\lambda)\ne0$ this would imply that $v_i=0$ for any $i$ so that $v_A=0$ which contradicts the assumptions.
Now we prove that $P(\lambda)v=0$. In view of \eqref{eq:thm3.1} we have 
\[
P(\lambda)v=\prod_{j=1}^qB_j(\lambda)v+\sum_{i=1}^qW_i(\lambda)
\prod_{j=1,\,j\ne i}^qB_j(\lambda)v= \prod_{j=1}^qB_j(\lambda)v+\sum_{i=1}^qW_i(\lambda)v_i.
\]
Moreover, by definition of $v$ we get
\[
P(\lambda)v= -\prod_{j=1}^qB_j(\lambda)(\prod_{i=1}^q B_i(\lambda)^{-1})\sum_{i=1}^qW_i(\lambda) v_i+\sum_{i=1}^qW_i(\lambda)v_i=0.
\]
Similarly, we can prove the opposite implication.
 \end{proof}
 
 A similar result can be proven for left eigenvectors. The following theorem relates left eigenvectors of $A(x)$ and left eigenvectors of $P(x)$.
 
 \begin{theorem} Let $P(x)$ be a matrix polynomial, $A(x)$ its secular
   $\ell$-ification defined in Theorem \ref{thm:3}, $\lambda\in\mathbb
   C$ such that $\det P(\lambda)=0$, and assume that $\det
   B_i(\lambda)\ne 0$.  If $u_A^t=(u_1^t,\ldots,u_q^t)\in\mathbb C^{mq}$ is such that
   $u_A^t A(\lambda)=0$, $u_A\ne 0$, then $u^tP(\lambda)=0$ where
   $u=\sum_{i=1}^q u_i\ne 0$. Conversely, if $u^tP(\lambda)=0$ for a nonzero vector $u\in\mathbb C^m$ then
   $u_A^t A(\lambda)=0$, where $u_A$ is a nonzero vector defined by $u_i^t=-u^tW_i(\lambda)B_i(\lambda)^{-1}$ for
   $i=1,\ldots, q$.
 \end{theorem}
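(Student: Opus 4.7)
My plan is to mirror the proof of the right-eigenvector result (Theorem~\ref{thm:righteigenvec}), with all operations performed on the left. First I would expand the relation $u_A^t A(\lambda) = 0$ block-column by block-column. Since the $j$-th block column of $A(\lambda)$ contains $B_j(\lambda)$ in the $j$-th block row and $W_j(\lambda)$ in every block row, this system is equivalent to
\[
u_j^t B_j(\lambda) + u^t W_j(\lambda) = 0, \qquad j = 1, \ldots, q,
\]
where $u := \sum_{i=1}^q u_i$. In particular, solving for $u_j^t$ recovers the formula given in the statement.

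For the forward direction, I would first rule out $u = 0$: if $u$ vanished, the above equations would give $u_j^t B_j(\lambda) = 0$, and invertibility of $B_j(\lambda)$ would force $u_j = 0$ for each $j$, contradicting $u_A \ne 0$. To show $u^t P(\lambda) = 0$, I would substitute $u^t W_j(\lambda) = -u_j^t B_j(\lambda)$ into the identity
\[
P(\lambda) = \prod_{i=1}^q B_i(\lambda) + \sum_{j=1}^q W_j(\lambda) C_j(\lambda),
\]
guaranteed by condition~\ref{cond:1} of Theorem~\ref{thm:3}. Using $B_j(\lambda) C_j(\lambda) = \prod_i B_i(\lambda)$---which is where commutativity of the $B_i$ plays its role---each summand collapses to $-u_j^t \prod_i B_i(\lambda)$, and summing yields $u^t P(\lambda) = (u^t - \sum_j u_j^t)\prod_i B_i(\lambda) = 0$.

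For the converse, the main obstacle is verifying that the vectors $u_j^t := -u^t W_j(\lambda) B_j(\lambda)^{-1}$ actually sum to $u$, since once $\sum_j u_j = u$ is established the block-column equations above are satisfied by construction. I would obtain this by right-multiplying the identity displayed above by $\prod_i B_i(\lambda)^{-1}$ (legal because each $B_i(\lambda)$ is invertible and the factors commute), producing
\[
P(\lambda) \prod_{i=1}^q B_i(\lambda)^{-1} = I_m + \sum_{j=1}^q W_j(\lambda) B_j(\lambda)^{-1},
\]
and then left-multiplying by $u^t$. The hypothesis $u^t P(\lambda) = 0$ collapses the left side to zero and yields $\sum_j u_j = u$ directly. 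Non-triviality of $u_A$ is then automatic, since $\sum_j u_j = u \neq 0$ precludes all $u_j$ from vanishing simultaneously. The entire argument is the left-handed analogue of Theorem~\ref{thm:righteigenvec}, with the commuting-$B_i$ identity $B_j(\lambda) C_j(\lambda) = \prod_i B_i(\lambda)$ doing the same structural work on both sides.
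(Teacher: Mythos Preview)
Your proof is correct and follows essentially the same route as the paper's: the block-column equations, the contradiction ruling out $u=0$, the use of $B_j(\lambda)C_j(\lambda)=\prod_i B_i(\lambda)$ in the forward direction, and the identity $P(\lambda)\prod_i B_i(\lambda)^{-1}=I+\sum_j W_j(\lambda)B_j(\lambda)^{-1}$ in the converse all appear in the paper's argument, organized in the same way. The only cosmetic difference is that the paper multiplies each block-column equation by $C_i(\lambda)$ and sums, whereas you substitute directly into the expression for $P(\lambda)$; these are the same computation.
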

\begin{proof}
If $u_A^t A(\lambda)=0$ then from the expression of $ A(x)$ given in
Theorem~\ref{thm:3} we have
\begin{equation}\label{eq:thm3}
u_i^tB_i(\lambda)+\left(\sum_{j=1}^q u_j^t \right)W_i(\lambda)=0,\quad i=1,\ldots, q.
\end{equation}
Assume that $u=\sum_{j=1}^q u_j=0$. Then from the above expression we obtain, for any $i$, $u_i^tB_i(\lambda)=0$ that is $u_i=0$ for any $i$ since 
$\det B_i(\lambda)\ne0$. This is in contradiction with $u_A\ne 0$. From \eqref{eq:thm3} we obtain
$u_i^t=-u^tW_i(\lambda)B_i(\lambda)^{-1}$. Moreover,
multiplying \eqref{eq:thm3} to the right by $\prod_{j=1,\,j\ne i}^q B_j$ yields
\[
0=u_i^t\prod_{j=1}^q B_j(\lambda)+u^tW_i(\lambda)\prod_{j=1,\,j\ne i}^qB_j(\lambda).
\]
Taking the sum of the above expression for $i=1,\ldots,q$ yields
\[
0=\left(\sum_{i=1}^q u_i^t\right)\prod_{j=1}^q B_j(\lambda)+u^t\sum_{i=1}^q W_i(\lambda)\prod_{j=1,\,j\ne i}^qB_j(\lambda)=u^tP(\lambda).
\]
Conversely, assuming that $u^tP(\lambda)=0$, from the representation 
\[
P(x)=\prod_{j=1}^n B_j(x)+\sum_{i=1}^q W_i(x)\prod_{j=1,\,j\ne i}^q B_i(x),
\]
 defining $u_i^t=-u^tW_i(\lambda)B_i(\lambda)^{-1}$ we obtain 
\[
\sum_{i=1}^qu_i^t=-u^t\sum_{i=1}^q W_i(\lambda)B_i(\lambda)^{-1}=-u^t(P(\lambda)\prod_{j=1}^qB_j(\lambda)^{-1}-I)=u^t
\] and therefore from \eqref{eq:thm3} we deduce that $u_A^tA(\lambda)=0$.
 \end{proof}

The above result does not cover the case where $\det B_i(\lambda)=0$ for some $i$.

\subsection{A sparse $\ell$-ification} 
Consider the block bidiagonal matrix $L$ having $I_m$ on the block
diagonal and $-I_m$ on the block subdiagonal. It is immediate to
verify that $L(e\otimes I_m)=e_1\otimes I_m$, where
$e_1=(1,0,\ldots,0)^t$. This way, the matrix polynomial $H(x)=LA(x)$ 
is a sparse $\ell$-ification of the form
\[
H(x)=\left[\begin{array}{ccccc}
B_1(x)+W_1(x)&W_2(x)&\ldots&W_{q-1}(x)&W_q(x)\\
-B_1(x)&B_2(x)\\
      &-B_2(x)&\ddots\\
      &      &\ddots&B_{q-1}(x)\\
      &      &      &-B_{q-1}(x)&B_q(x)
\end{array}\right]
\]

\section{A particular case}\label{sec:particular}
In the previous section we have provided (strong) $\ell$-ifications of
a matrix polynomial $P(x)$ under the assumption of the existence of
the representation
 \begin{equation}\label{eq:repr}
P(x)=\prod_{i=1}^q B_i(x)+\sum_{i=1}^q W_i(x)\prod_{j\ne i}B_j(x)
\end{equation}
and under suitable conditions on $B_i(x)$. In this section we show
that a specific choice of the blocks $B_i(x)$ satisfies the above assumptions
and implies the existence of the representation \eqref{eq:repr}. Moreover, we
provide explicit formulas for the computation of $W_i(x)$ given $P(x)$
and $B_i(x)$.

We provide also some  additional conditions in order to make the
resulting $\ell$-ification strong. 

\begin{assumption}\label{ass:1}
The matrix polynomials $B_i(x)$ are defined as follows
\[
 \begin{cases}
B_i(x)=b_i(x)I & i=1,\ldots,q-1\\
B_q(x)=b_q(x)P_n+s I & \text{otherwise} \\
\end{cases}\]
where $b_i(x)$ are scalar polynomials such that $\deg b_i(x)=d_i$ and $\sum_{i=1}^q d_i=n$; the polynomials $b_i(x)$, $i=1,\ldots,q$,  are pairwise co-prime; 
 $s$ is a
  constant such that  $\lambda b_q(\xi) + s \neq 0$  for any eigenvalue $\lambda$ of $P_n$ and for any root $\xi$ of $b_i(x)$, for $i = 1, \dots, q-1$.
\end{assumption}

In this case it is possible to prove the existence of the representation
\eqref{eq:repr}.
We rely on the Chinese remainder theorem that here we rephrase in terms of matrix polynomials.

\begin{lemma}\label{lem:chin}
Let $b_i(x)$, $i=1,\ldots,q$ be co-prime polynomials of degree
$d_1,\ldots,d_q$, respectively, such that $\sum_{i=1}^q d_i=n$. If
$P_1(x)$, $P_2(x)$ are matrix polynomials of degree at most $n-1$ then
$P_1(x)=P_2(x)$ if and only if $P_1(x)-P_2(x)\equiv 0\mod b_i(x)$, for
$i=1,\ldots,q$.
\end{lemma}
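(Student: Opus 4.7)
The plan is to reduce the matrix statement to the classical scalar Chinese Remainder Theorem by working entrywise, since the moduli $b_i(x)$ are scalar polynomials.

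First I would set $R(x) := P_1(x) - P_2(x)$, which is a matrix polynomial of degree at most $n-1$. The forward implication is immediate: if $P_1(x) = P_2(x)$ then $R(x) = 0$, which is divisible by any polynomial, in particular by each $b_i(x)$.

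For the reverse direction, suppose $R(x) \equiv 0 \bmod b_i(x)$ for every $i = 1, \ldots, q$. Because each $b_i(x)$ is a scalar polynomial, the congruence $R(x) \equiv 0 \bmod b_i(x)$ is equivalent to $b_i(x)$ dividing every entry $[R(x)]_{h,k}$ of $R(x)$ as a scalar polynomial. Fix an entry $r(x) = [R(x)]_{h,k}$: it is a scalar polynomial of degree at most $n-1$ such that $b_i(x) \mid r(x)$ for each $i$. Since the $b_i(x)$ are pairwise coprime, the product $b(x) = \prod_{i=1}^q b_i(x)$, which has degree $n$, also divides $r(x)$. But $\deg r(x) \le n-1 < n = \deg b(x)$, which forces $r(x) = 0$. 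Applying this to every entry gives $R(x) = 0$, i.e., $P_1(x) = P_2(x)$.

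No step here is a serious obstacle; the only point worth highlighting is that, because the moduli are scalar, divisibility in $\mpol m$ coincides with entrywise scalar divisibility, so no noncommutativity issue arises and the classical scalar CRT can be invoked directly on each entry.
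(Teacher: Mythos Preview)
Your proof is correct and follows essentially the same approach as the paper: both argue that the forward implication is trivial and, for the converse, observe that each scalar entry of $P_1(x)-P_2(x)$ is divisible by every $b_i(x)$, hence by $\prod_i b_i(x)$ of degree $n$, forcing it to vanish by the degree bound. Your version is slightly more explicit about the entrywise reduction, but the argument is the same.
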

\begin{proof}
The implication $P_1(x)-P_2(x)=0$ $\Rightarrow$ $P_1(x)-P_2(x)\equiv
0\mod b_i(x)$ is trivial. Conversely, if $P_1(x) - P_2(x) \equiv 0
\mod b_i(x)$ for every $b_i$ then the entries of $P_1(x)-P_2(x)$ are
multiples of $\prod_{i = 1}^q b_i(x)$ for the co-primality of the polynomials $b_i(x)$. But this implies that $P_1(x) -
P_2(x) = 0$ since the degree of $P_1(x)-P_2(x)$ is at most $n-1$ while
$\prod_{i = 1}^q b_i(x)$ has degree $n$.
\end{proof}

We have the following

\begin{theorem} \label{thm:1}
  Let $P(x) = \sum_{i = 0}^n x^i P_i$ be an $m \times m$ matrix
  polynomial over an algebraically closed field. Under Assumption \ref{ass:1}, set $C_i(x) = \prod_{j \neq i}
  B_j(x)$. Then there exists a
  unique decomposition
  \begin{equation}\label{eq:thm}
    P(x) = B(x) + \sum_{i = 1}^q W_i(x) C_i(x), \qquad
    B(x) = \prod_{i = 1}^q B_i(x),
  \end{equation}
  where $W_i(x)$ are matrix polynomials of degree less than $d_i$ for
  $i = 1, \dots, q$ defined by
\begin{equation}\label{eq:ofb}
\begin{split}
&W_i(x)=\frac{P(x)}{\prod_{j=1,\, j\ne i}^{q-1}b_j(x)}(b_q(x)P_n+sI_m)^{-1}\mod b_i(x),\quad i=1,\ldots,q-1\\
&W_q(x)=\frac 1{\prod_{j=1}^{q-1}b_j(x)}P(x)-sI_m-s\sum_{j=1}^{q-1}\frac {W_j(x)}{b_j(x)}\mod b_q(x).
\end{split}
\end{equation}
\end{theorem}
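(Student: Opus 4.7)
My plan is to work via a matrix-valued Chinese remainder argument, isolating each $W_i$ by reducing the desired identity $P(x) = B(x) + \sum_j W_j(x) C_j(x)$ modulo a suitable scalar polynomial. Before starting, I would introduce the shorthand $\pi(x) := \prod_{j=1}^{q-1} b_j(x)$ so that $B(x) = \pi(x) B_q(x)$, $C_q(x) = \pi(x)$, and $C_i(x) = (\pi(x)/b_i(x)) B_q(x)$ for $i < q$. I would also record the degree bound that will let Lemma~\ref{lem:chin} finish the job: since the leading coefficient of both $P$ and $B$ equals $P_n$, one has $\deg(P - B) \le n-1$, while $\deg(W_j C_j) < d_j + (n - d_j) = n$, so the residual $R(x) := P(x) - B(x) - \sum_j W_j(x) C_j(x)$ has degree at most $n-1$.

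\textbf{Key auxiliary step.} The linchpin of the argument is the following invertibility claim: for each $i \in \{1,\ldots,q-1\}$ there exists a matrix polynomial $B_q^{[i]}(x)$ with $B_q(x) B_q^{[i]}(x) \equiv I_m \pmod{b_i(x)}$. Indeed, $\det B_q(x) = \prod_{k=1}^m (b_q(x)\lambda_k + s)$, where $\lambda_1,\ldots,\lambda_m$ are the eigenvalues of $P_n$; the hypothesis $\lambda_k b_q(\xi) + s \ne 0$ at every root $\xi$ of $b_i$ says precisely that $\gcd(\det B_q(x), b_i(x)) = 1$. By Bezout there are scalars $\alpha(x), \beta(x)$ with $\alpha(x)\det B_q(x) + \beta(x) b_i(x) = 1$, and then $B_q^{[i]}(x) := \alpha(x)\,\mathrm{adj}(B_q(x))$ works. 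I expect this to be the main obstacle — everything else amounts to careful bookkeeping once one has matrix inverses modulo $b_i$.

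\textbf{Existence.} Define the $W_i(x)$ by the explicit formulas~\eqref{eq:ofb}, which are meaningful thanks to the previous step (and because $\pi/b_i$ is coprime to $b_i$, and $\pi$ is coprime to $b_q$). I would then verify that $R(x) \equiv 0 \pmod{b_i(x)}$ for every $i$. For $i < q$, all terms except $W_i C_i$ vanish modulo $b_i$ because $b_i$ divides $B$ and $C_j$ for $j \ne i$; and the formula for $W_i$ was engineered precisely so that $W_i C_i \equiv P \pmod{b_i}$. For $i = q$, reduction modulo $b_q$ turns $B_q$ into $sI_m$, so $B + \sum_{j<q} W_j C_j + W_q C_q \equiv \pi\bigl(sI_m + s\sum_{j<q} W_j/b_j + W_q\bigr) \pmod{b_q}$, and the definition of $W_q$ makes the parenthesis equal $P/\pi$ modulo $b_q$, giving $P$ on the nose. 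Having established $R \equiv 0 \pmod{b_i}$ for $i = 1,\ldots,q$ and $\deg R \le n-1$, Lemma~\ref{lem:chin} yields $R = 0$.

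\textbf{Uniqueness.} Suppose $\widetilde W_i$ is another collection of matrix polynomials with $\deg \widetilde W_i < d_i$ realizing the same decomposition, so that $\sum_i (\widetilde W_i - W_i) C_i = 0$. Reducing modulo $b_i$ for $i < q$ isolates $(\widetilde W_i - W_i)(\pi/b_i)B_q \equiv 0 \pmod{b_i}$; multiplying on the right by $B_q^{[i]}$ and by the scalar inverse of $\pi/b_i$ modulo $b_i$, the degree bound forces $\widetilde W_i = W_i$. Substituting this back leaves $(\widetilde W_q - W_q)\pi = 0$, whence $\widetilde W_q = W_q$ by degree considerations, completing the proof.
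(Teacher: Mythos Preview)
Your argument is correct and follows essentially the same Chinese-remainder strategy as the paper: reduce the desired identity modulo each $b_i$, use that $b_i$ kills $B$ and every $C_j$ with $j\ne i$, invert $C_i$ modulo $b_i$ to pin down $W_i$, and invoke Lemma~\ref{lem:chin} on the degree-$(n{-}1)$ residual. Your write-up is in fact somewhat more complete than the paper's: you give an explicit adjugate construction for $B_q^{-1}\bmod b_i$, you handle the $i=q$ reduction carefully (the paper's text omits the $-B(x)$ term when isolating $W_qC_q$, though its final formula is correct), and you supply a separate uniqueness argument that the paper leaves implicit.
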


\begin{proof}
We show that there exist matrix polynomials $W_i(x)$ of degree less
than $d_i$ such that $P(x)-B(x)\equiv \sum_{i = 1}^q W_i(x) C_i(x)\mod
b_i(x)$ for $i=1,\ldots, q$. Then we apply Lemma \ref{lem:chin} with
$P_1(x)=P(x) - B(x)$ that by construction has degree at most $n-1$,
and with $P_2(x) = \sum_{i = 1}^q W_i(x) C_i(x)$, and conclude that
$P(x)=B(x)+ \sum_{i = 1}^q W_i(x) C_i(x)$.  Since for $i=1,\ldots,q-1$
the polynomial $b_i(x)$ divides every entry of $B(x)$ and of $C_j(x)$
for $j \neq i$, we find that $ P(x) \equiv W_i(x)C_i(x) \mod
b_i(x),\quad i=1,\ldots,q.  $ Moreover, for $i<q$ we have $C_i(x) =
\left( \prod_{j \neq i, j < q} b_j(x) I_m \right) (b_q(x) P_n + s I_m
)$.  The first term is invertible modulo $b_i(x)$ since by assumption
$b_i(x)$ is co-prime with $b_j$ for every $j \neq i$.  We need to
prove that the matrix on the right is invertible modulo $b_i(x)$, that
is, its eigenvalues $\mu$ are such that $b_i(\mu) \neq 0$. Now, since
the eigenvalues of $b_q(x) P_n + s I_m $ have the form
$\mu=b_q(x)\lambda+s$, where $\lambda $ is an eigenvalue of
$P_n$, it is enough to ensure that for every $\xi$ which is a root of $b_i(x)$
the value $\lambda b_q(\xi) + s$ is different from $0$ for
$i=1,\ldots,q-1$.  This is guaranteed by hypothesis, and so we obtain
the explicit formula for $W_i(x)$, $i=1,\ldots,q-1$ given by
\eqref{eq:ofb}.  It remains to find an explicit expression for
$W_q(x)$. We have $W_q(x)C_q(x)=P(x)-\sum_{j=1}^{q-1}W_j(x)C_j(x)$,
where the right-hand side is made by known polynomials. This way,
taking the latter expression modulo $b_q(x)$ we can compute $W_q(x)$
since $C_q(x)=\prod_{j=1}^{q-1}b_j(x)I_m$ is invertible modulo
$b_q(x)$ in view of the co-primality of the polynomials
$b_1(x),\ldots,b_q(x)$. This way we get the expression of $W_q$ in
\eqref{eq:ofb}.
\end{proof}

\begin{remark}
  Note that in the case where $P_n = I$ it is possible 
  to choose $s = 0$ 
  so that Equations~\eqref{eq:ofb} take a simpler form.
\end{remark}

We observe that the matrix polynomials $B_i(x)$ which satisfy
Assumption \ref{ass:1} verify the hypotheses of Theorem
\ref{thm:3}. Therefore there exists an $\ell$-ification of $P(x)$
which can be computed. In view of Theorem~\ref{thm:stronglin}
we have that this $\ell$-ification is also strong if the following conditions
are satisfied: 
\begin{enumerate}
  \item The $B_i(x)$ have the same degree $d$. In our case this implies
    that $\deg b_i(x) = d$ for every $i = 1, \dots, q$. 
  \item The matrix polynomials $x^d B_i(x^{-1})$ are right coprime. It can be
    seen that under Assumption~\ref{ass:1} this is equivalent to 
    asking that $b_i(0) \ne 0$ for every $i = 1, \dots, q$ and that
    either $P_n\ne I$ or $\xi^d b_q(\xi) + s \ne 0$ for every $\xi$ root of
    $b_i(x)$, for $i < q$. 
\end{enumerate}

Here we provide an example of an $\ell$-ification of degree $2$ for a
$2 \times 2$ matrix polynomial $P(x)$ of degree $4$. 

\begin{example}\rm Let
\[
 P(x) = \begin{bmatrix}
  x^4+2 & -1 \\
  x & x^3-1 \\
 \end{bmatrix}, \quad
 b_1(x) = x^2-2, \quad
 b_2(x) = x^2 + 2, \quad
 s = 1.
\]
Applying the above formulas we obtain
\[
  W_1(x) = \begin{bmatrix}
    \frac{6}{5} & -1 \\
    \frac 15 x  & -1+2x \\
  \end{bmatrix}, \qquad
  W_2(x) = \begin{bmatrix}
   -\frac{11}{5} & 0 \\
   -\frac 15 x   & -1+x \\
  \end{bmatrix}. 
\]
Then we have that $A(x)$ is a degree $2$ $\ell$-ification for $P(x)$,
that is, a quadratization, by setting
\[
  A(x) = \begin{bmatrix}
    x^2 - 2 & 0 & 0 & 0\\
    0 & x^2 - 2 & 0 & 0 \\
    0 & 0 & x^2 + 3 & 0 \\
    0 & 0 & 0 & 1 \\
  \end{bmatrix} + \begin{bmatrix}
    \frac{6}{5} & -1 & -\frac{11}{5} & 0 \\
    \frac 15 x  & -1+2x & -\frac 15x & -1+x \\
   \frac{6}{5} & -1 & -\frac{11}{5} & 0 \\
   \frac 15x  & -1+2x & -\frac 15 x & -1+x \\
  \end{bmatrix}.
\]
\end{example}

In the case where $b_i(x)$, $i=1,\ldots,q$ are linear polynomials we
have the following:

\begin{corollary}\label{cor:d=1} If  $b_i(x)=x-\beta_i$, $i=1,\ldots,q$, then $q = n$ and
\[
\begin{split}
&W_i=\frac{P(\beta_i)} {\prod_{j=1,\, j\neq i} ^ {n-1} (\beta_i - \beta_j )}((\beta_i-\beta_n) P_n +s I_m)^{-1}, \quad i=1,\ldots,n-1,\\[1ex]
&W_n= \frac{ P(\beta_n)}{\prod_{j=1}^{n-1}(\beta_n-\beta_j)} - sI_m-s\sum_{j = 1}^{n-1} \frac{W_j}{\beta_n - \beta_j}.
\end{split}
\]
Moreover, if $P(x)$ is monic
then with $s=0$ the expression for $W_i$ turns simply into
$W_i=P(\beta_i)/\prod_{j=1,\, j\ne i}^n(\beta_i-\beta_j)$, for $i=1,\ldots,n$.
\end{corollary}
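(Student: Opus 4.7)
The plan is to derive the corollary as a direct specialization of Theorem \ref{thm:1} to the linear case $b_i(x) = x - \beta_i$. The very first observation is that since each $b_i$ has degree $d_i = 1$ and $\sum_{i=1}^q d_i = n$, we must have $q = n$, which takes care of the first claim.

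Next, I would apply the general formulas \eqref{eq:ofb}. The key simplification is that reduction modulo $b_i(x) = x - \beta_i$ is just evaluation at $x = \beta_i$: for any matrix polynomial $F(x)$, one has $F(x) \bmod (x - \beta_i) = F(\beta_i)$. Applying this to the first line of \eqref{eq:ofb}, for $i = 1, \dots, n-1$, the denominator $\prod_{j \neq i,\, j \leq q-1} b_j(x)$ evaluates to $\prod_{j=1,\, j \neq i}^{n-1}(\beta_i - \beta_j)$, while $b_q(x)P_n + sI_m$ evaluates to $(\beta_i - \beta_n)P_n + sI_m$. Invertibility of this matrix at $x = \beta_i$ is guaranteed by Assumption \ref{ass:1}, which was designed precisely to ensure $\lambda b_q(\beta_i) + s \neq 0$ for every eigenvalue $\lambda$ of $P_n$. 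This yields the first stated formula for $W_i$ with $i < n$. Similarly, evaluating the second line of \eqref{eq:ofb} at $x = \beta_n$ gives the stated expression for $W_n$, where the term $W_j(x)/b_j(x)$ becomes the constant $W_j/(\beta_n - \beta_j)$ (note that $W_j$ is already a constant matrix for $j < n$, since $\deg W_j < d_j = 1$).

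For the monic case, I would specialize to $P_n = I_m$ and $s = 0$. Under these choices, $(\beta_i - \beta_n)P_n + sI_m = (\beta_i - \beta_n) I_m$, whose inverse is $(\beta_i - \beta_n)^{-1} I_m$. Substituting this into the formula for $W_i$ with $i < n$ merges the extra factor into the product in the denominator, producing the uniform expression $W_i = P(\beta_i) / \prod_{j=1,\, j\neq i}^{n}(\beta_i - \beta_j)$. For $i = n$, setting $s = 0$ kills both the $-sI_m$ term and the correction sum, leaving the same unified formula. The assumption $P_n = I$ also makes the additional hypothesis of Assumption \ref{ass:1} trivially satisfiable, as noted in the remark following Theorem \ref{thm:1}.

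There is no real obstacle here: the entire proof is bookkeeping on top of Theorem \ref{thm:1}, with the only mildly delicate point being to check that the specialized denominators and the constant matrix $(\beta_i - \beta_n)P_n + sI_m$ are invertible, which follows directly from Assumption \ref{ass:1}. The cleanup to the monic case is a one-line algebraic merger of factors.
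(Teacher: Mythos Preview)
Your proposal is correct and follows exactly the paper's approach: the paper's proof simply says that the corollary follows from Theorem~\ref{thm:1} together with the property $v(x)\bmod (x-\beta)=v(\beta)$, which is precisely the evaluation-at-$\beta_i$ specialization you carry out. Your additional remarks on invertibility via Assumption~\ref{ass:1} and the monic simplification are sound and just make explicit what the paper leaves implicit.
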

\begin{proof}
It follows from Theorem \ref{thm:1} and from the property $v(x)\mod x-\beta=v(\beta)$ valid for any polynomial $v(x)$.
\end{proof}

Given $n$ and $q\le n$, let $\ell=\lceil\frac nq\rceil$.  We may choose
polynomials $b_i(x)$ of degree $d_i$ in between $\ell-1$ and $\ell$ such
that $\sum_{i=1}^q d_i=n$. This way  $A(x)$ is an $mq\times mq$ matrix polynomial of degree $\ell$. For
instance, if $\ell=2$ we obtain a quadratization of $P(x)$.

If $P(x)$ is monic, that is $P_n=I$, we can handle another particular case of $\ell$-ification by
choosing
diagonal matrix polynomials $B_i(x)$. 

Let
$B_i(x)=\diag(d^{(i)}_1(x),\ldots,d^{(i)}_m(x))=:D_i(x)$ be monic
matrix polynomials such that the corresponding diagonal entries of $D_i(x)$
and $D_j(x)$ are pairwise co-prime for any $i\ne j$ so that the
second assumption
of Theorem \ref{thm:3} is satisfied. Let us prove that there exist
matrix polynomials $W_i(x)$ such that $\deg W_i(x)<\deg D_i(x)$ and
\begin{equation}\label{eq:dd}
P(x)=\prod_{i=1}^q D_i(x)+\sum_{i=1}^q W_i(x)C_i(x),\quad 
C_i(x)=\prod_{j=1,\,j\ne i}^qD_i(x),
\end{equation}
so that Theorem \ref{thm:3} can be applied.
Observe that
equating the coefficients of $x^i$ in \eqref{eq:dd} for $i=0,\ldots,n-1$
provides a linear system of 
$m^2n$ equations in $m^2n$ unknowns.
Equating the $j$th columns of both sides of \eqref{eq:dd} modulo  
$d^{(i)}_j(x)$ yields
\[
P(x)e_j\mod d^{(i)}_j(x) =\prod_{s=1,\, s\ne i}^n d_j^{(s)} W_i(x)e_j \mod d^{(i)}_j(x),\quad 
i=1,\ldots,m.
\]
 The
above equation allows one to compute the coefficients of the
polynomials of degree at most $\deg D_j(x)-1$ in the $j$th column of
$W_j(x)$ by means of the Chinese remainder theorem.

\section{Computational issues}\label{sec:comput}
In the previous section we have given explicit formulas for the
(strong) $\ell$-ification of a matrix polynomial satisfying Assumption
\ref{ass:1}. Here we describe some algorithms for the computation of
the matrix coefficients $W_i(x)$.

In the case where $d=1$, the equations given in Corollary
\ref{cor:d=1} provide a straightforward algorithm for the computation
of the matrices $W_i$ for $i=1,\ldots, n$. In the case where $d_i=\deg
b_i(x)>1$ for some values of $i$ we have to apply \eqref{eq:ofb} which
involve operations modulo scalar polynomials $b_i(x)$ for
$i=1,\ldots,q$.

The main computational issues in this case are the evaluation of a
scalar polynomial modulo a given $b_i(x)$, the evaluation of the
inverse of a scalar polynomial modulo $b_i(x)$ and the more
complicated task of evaluating the inverse of a matrix polynomial
modulo $b_i(x)$. 

In general we recall that, given polynomials $v(x)$
and $b(x)$ such that $v(x)$ is co-prime with $b(x)$, there exist
polynomials $\alpha(x)$ and $\beta(x)$ such that
\begin{equation}\label{eq:alfa}
\alpha(x)v(x)+\beta(x)b(x)=1.
\end{equation}
This way, we have $\alpha(x)=1/v(x)\mod
b(x)$. 

There are algorithms for computing the coefficients of
$\alpha(x)$ given the coefficients of $v(x)$ and $b(x)$. We refer the
reader to the book \cite{bp:book} and to any textbook in computer
algebra for the design and analysis of algorithms for this
computation.  Here, we recall a simple numerical technique, based on the
evaluation-interpolation strategy, which can be also directly applied to
the matrix case.

Observe that from \eqref{eq:alfa} it turns out that
$\alpha(\xi_j)=1/v(\xi_j)$ for $j=1,\ldots,d$, where $\xi_j$ are the
zeros of the polynomial $b(x)$ of degree $d$. Since $\alpha(x)$ has
degree at most $d-1$, it is enough to compute the values $1/v(\xi_j)$
in order to recover the coefficients of $\alpha(x)$ through an
interpolation process. This procedure amounts to $d$ evaluations of a
polynomial at a point and to solving an interpolation problem for the
overall cost of $O(d^2)$ arithmetic operations. If the polynomial
$b(x)$ is chosen in such a way that its roots are multiples of the
$d$th roots of unity then the evaluation/interpolation problem is
well conditioned, and it can be performed by means of FFT which is a fast
and numerically stable procedure.

The evaluation/interpolation technique can be extended to the case of
matrix polynomials. For instance, the computation of the coefficients
of the matrix polynomial $F(x)=V(x)^{-1}\mod b(x)$, where $V(x)$ is a
given matrix polynomial co-prime with $b(x)I$, can be performed in the
following way:
\begin{enumerate}
\item compute $Y_k=V(\xi_k)^{-1}$, for $k=1,\ldots,d$;
\item for any pair $(i,j)$, interpolate the entries $y^{(k)}_{i,j}$, $k=1,\ldots,d$ of the matrix $Y_k$ and find the coefficients of the polynomial $f_{i,j}(x)$, where $F(x)=(f_{i,j}(x))$.
\end{enumerate}
This procedure requires the evaluation of $m^2$ polynomials at $d$
points, followed by the inversion of $d$ matrices of order $m$ and the
solution of $m^2$ interpolation problems. The cost turns to
$O(m^2d^2)$ ops for the evaluation stage, $O(m^3d)$ ops for the
inversion stage, and $O(m^2d^2)$ for the interpolation stage. In the
case where the polynomial $b(x)$ is such that its roots are multiple
of the $d$ roots of the unity, the evaluation and the interpolation
stage have cost $O(m^2d\log d)$ if performed by means of FFT.

Observe that, in the case of polynomials $b_i(x)$ of degree one, the above procedure coincides with the one provided directly by equations in Corollary
 \eqref{cor:d=1}.

 \section{Numerical issues}\label{sec:app}
Let $\omega_n$ be a principal $n$th root of the unity, define
$\Omega_n=\frac1{\sqrt n}(\omega_n^{ij})_{i,j=1,n}$ the Fourier matrix
such that $\Omega_n^*\Omega_n=I_n$ and observe that $\Omega_n e=e_n$
where $e=(1,\ldots,1)^t$, $e_n=(0,\ldots,0,1)^t$.  Assume for simplicity
$P_n=I_m$. For the
linearization obtained with $\beta_i=\omega_n^i$, $i=1,\ldots,n$, we
have, following Corollary~\ref{cor:d=1}, 
\[
A(x)=x I_{mn}-\diag(\omega_n^1 I_m,\omega_n^2
I_m,\ldots,\omega_n^n I_m)+(e\otimes I_m)[W_1,\ldots,W_n]
\]
with $W_i=\frac 1n \omega_n^iP(\omega_n^i)$. The latter equation
follows from $W_i = P(\omega_n^i) / (\prod_{j \neq i} (\omega_n^i - \omega_n^j))$ since $\prod_{j \neq i} (\omega_n^i - \omega_n^j)$ coincides with the first 
derivative of $x^n - 1 = \prod_{j = 1}^n (x - \omega_n^j)$ evaluated at $x = \omega_n^i$, that is $\prod_{j \neq i} (\omega_n^i - \omega_n^j) = n \omega_n^{-i}$. 
It is easy to verify that the pencil $(\Omega_n^*\otimes I_m)A(x)(\Omega_n
\otimes I_m)$ has the form
\[
x I_{mn}-F,\quad F=(C\otimes I_m)-\begin{bmatrix} P_0+I_m \\P_1 \\ \vdots \\P_{n-1}\end{bmatrix} (e_n^t \otimes I_m)
\]
where $C=(c_{i,j})$ is the unit circulant matrix defined by
$c_{i,j}=(\delta_{i,j+1\mod n})$. That is, $F$ is the block Frobenius
matrix associated with the matrix polynomial $P(x)$.

This shows that our linearization includes the companion Frobenius
matrix with a specific choice of the nodes. In particular, since
$\Omega_n$ is unitary, the condition number of the eigenvalues of
$A(x)$ coincides with the condition number of the eigenvalues of $F$.
Observe also that if we choose $\beta_i=\alpha\omega_n^i$ with
$\alpha\ne 0$, then $(\Omega_n^* \otimes I_m) A(x)(\Omega_n \otimes I_m)=xI-D_\alpha^{-1}
FD_\alpha$ for $D_\alpha=\diag(1,\alpha,\ldots,\alpha^{n-1})$.
That is, we obtain a scaled Frobenius pencil.

Here, we present some numerical experiments to show that in many
interesting cases a careful choice of the $B_i(x)$ can lead to
linearizations (or $\ell$-ifications) where the eigenvalues are much
better conditioned than in the original problem.  Here we are
interested in measuring the conditioning of the eigenvalues of a
pencil built using these different strategies.  Recall that the
conditioning of an eigenvalue $\lambda$ of a matrix pencil $xA - B$
can be bounded by $\kappa_\lambda \leq \frac{\lVert v \rVert \lVert w
  \rVert}{|w^* A v|}$ where $v$ and $w$ are the right and left
eigenvectors relative to $\lambda$, respectively \cite{tisseur}.  This
is the quantity measured by the {\tt condeig} function in MATLAB that
we have used in the experiments.  The above bound can be extended to a
matrix polynomial $P(x) = \sum_{i = 0}^n P_i x^i$. In particular, the
conditioning number of an eigenvalue $\lambda$ of $P$ can be bounded
by $\kappa_\lambda \leq \frac{\lVert v \rVert \lVert w \rVert}{|w^*
  P'(\lambda) v|}$ where $v$ and $w$ are the right and left
eigenvectors relative to $\lambda$, respectively, \cite{tisseur}.
Observe that, if $xA - B$ is the Frobenius linearization of a matrix
polynomial $P(x)$, then the condition number of the eigenvalues of the
linearization is larger than the one concerning $P(x)$, since the
perturbation to the input data on $xA - B$ can be seen as a larger set
with respect to the perturbation to the coefficients of $P(x)$. This
is not true, in general, for a linearization in a different basis, as
in our case, since there is not a direct correspondence between the
perturbations on the original coefficients and the perturbations on
the linearization.  An analysis of the condition number for eigenvalue
problems of matrix polynomials represented in different basis is given
in \cite{corless}.

The code used to generate these examples can be downloaded from
\url{http://numpi.dm.unipi.it/software/secular-linearization/}. 

\subsection{Scalar polynomials}
As a first example, consider a monic scalar polynomial $p(x) = \sum_{i =
  0}^n p_i x^i$ where the coefficients $p_i$ have unbalanced moduli.
In this case, we generate $p_i$ using the MATLAB command 
{\tt \lstinline-p = exp(12 * randn(1,n+1));- 
\lstinline-p(n+1)=1;-  }

\begin{figure}[ht]
\begin{center}
  \includegraphics[width=7cm]{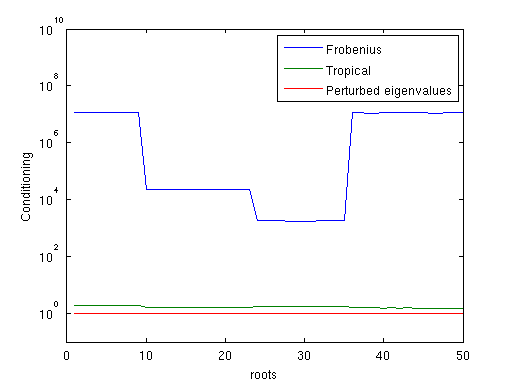}
  \caption{Conditioning of the eigenvalues of different linearizations of a degree $50$
    scalar polynomial with random unbalanced coefficients.}
     \label{fig:p50_test}
\end{center}
\end{figure}

Then we build our linearization by means of the function
{\tt \lstinline-seccomp(b,p)- }
that takes a vector {\tt \lstinline-b- } together with the
coefficients of the polynomial and generates the linearization $A(x)$
where $B_i(x) = x - \beta_i$ for $\beta_i={\tt b(i)}$.  Finally, we measure the conditioning of
the eigenvalues of $A(x)$ by means of the Matlab function
{\tt \lstinline-condeig-}.

We have considered three different linearizations:
\begin{itemize}
  \item The Frobenius linearization obtained by {\tt \lstinline-compan(p)-};
  \item the secular linearization obtained by taking  as $\beta_i$ some perturbed
    values of the roots; these values have been obtained by
    multiplying the roots by $(1 + \epsilon)$ with
    $\epsilon$ chosen randomly with
    Gaussian distribution $\epsilon \sim 10^{-12} \cdot N(0, 1)$. 
   \item the secular linearization with nodes given by the tropical roots of
     the polynomial multiplied by unit complex numbers.
\end{itemize}

The results are displayed in Figure~\ref{fig:p50_test}. One can see
that in the first case the condition numbers of the eigenvalues are
much different from each other and can be as large as $10^{8}$ for
the worst conditioned eigenvalue.  In the second case the condition
number of all the eigenvalues is close to $1$, while in the
third linearization the condition numbers are much smaller than those
of the Frobenius linearization and have an almost uniform distribution.

These experimental results are a direct verification of a conditioning
result of \cite[Sect. 5.2]{secsolve} that is at the basis of the {\tt secsolve}
algorithm presented in that paper.  
These tests are implemented in the function files {\tt \lstinline-Example1.m-} 
and {\tt \lstinline-Experiment1.m-} included
in the MATLAB source code for the experiments. 
A similar behavior of the conditioning for the eigenvalue 
problem holds in the matrix case.

\subsection{The matrix case} 
Consider now a matrix polynomial $ P(x) = \sum_{i = 0}^n P_i x^i $.
As in the previous case, we start by considering monic matrix
polynomials.  As a first example, consider the case where the
coefficients $P_i$ have unbalanced norms.
Here is the Matlab code that we have used to generate this test: 

{\tt
\begin{lstlisting}
n = 5; m = 64;
P = {};
for i = 1 : n
  P{i} = exp(12 * randn) * randn(m);
end
P{n+1} = eye(m);
\end{lstlisting}
}

We can give reasonable estimates to the modulus of the eigenvalues
using the Pellet theorem or the tropical roots. See \cite{gs09,
  pellet}, for some insight on these tools.

The same examples given in the scalar case have been replicated for matrix polynomials relying on the Matlab script published on
the website reported above by issuing the following commands: 

{\tt
\begin{lstlisting}
>> P = Example2();
>> Experiment2(P);
\end{lstlisting}
}

\begin{figure}[ht] 
  \centering
  \begin{minipage}{0.48\textwidth}
    \includegraphics[width=\textwidth]{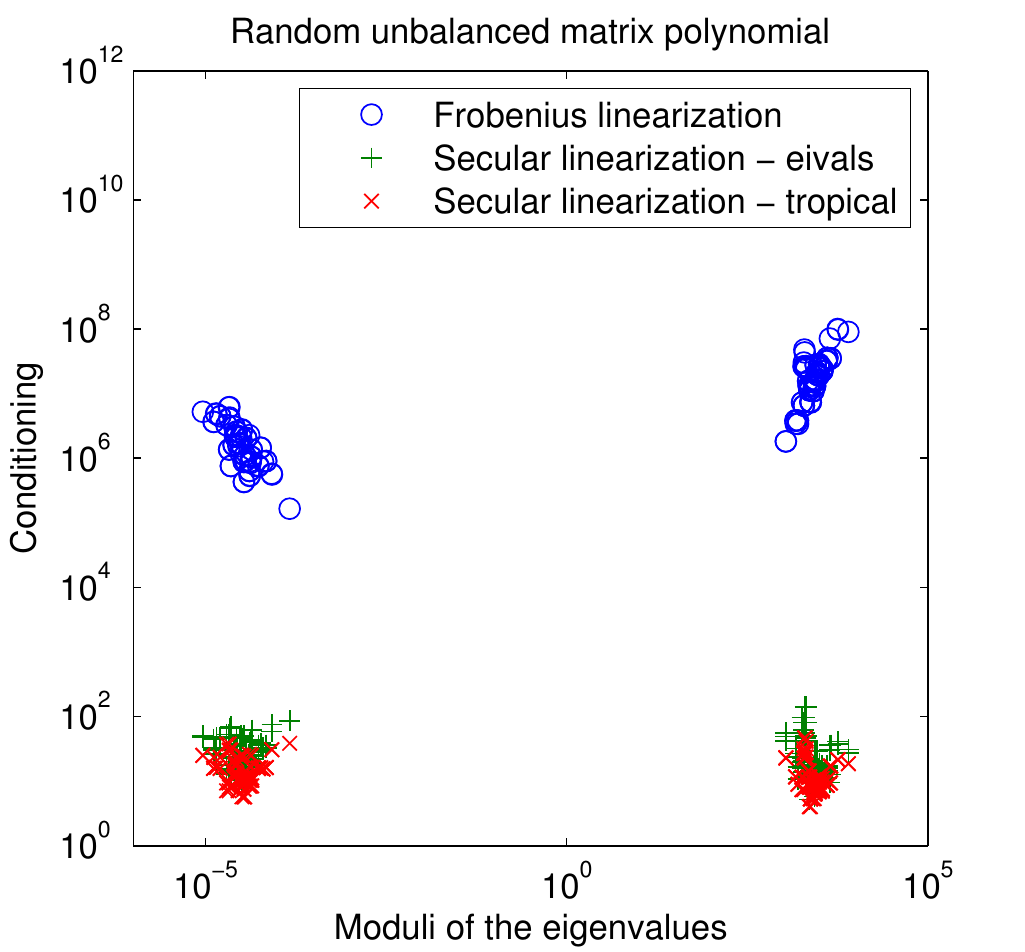}
  \end{minipage}
  ~
  \begin{minipage}{0.48\textwidth}
    \includegraphics[width=\textwidth]{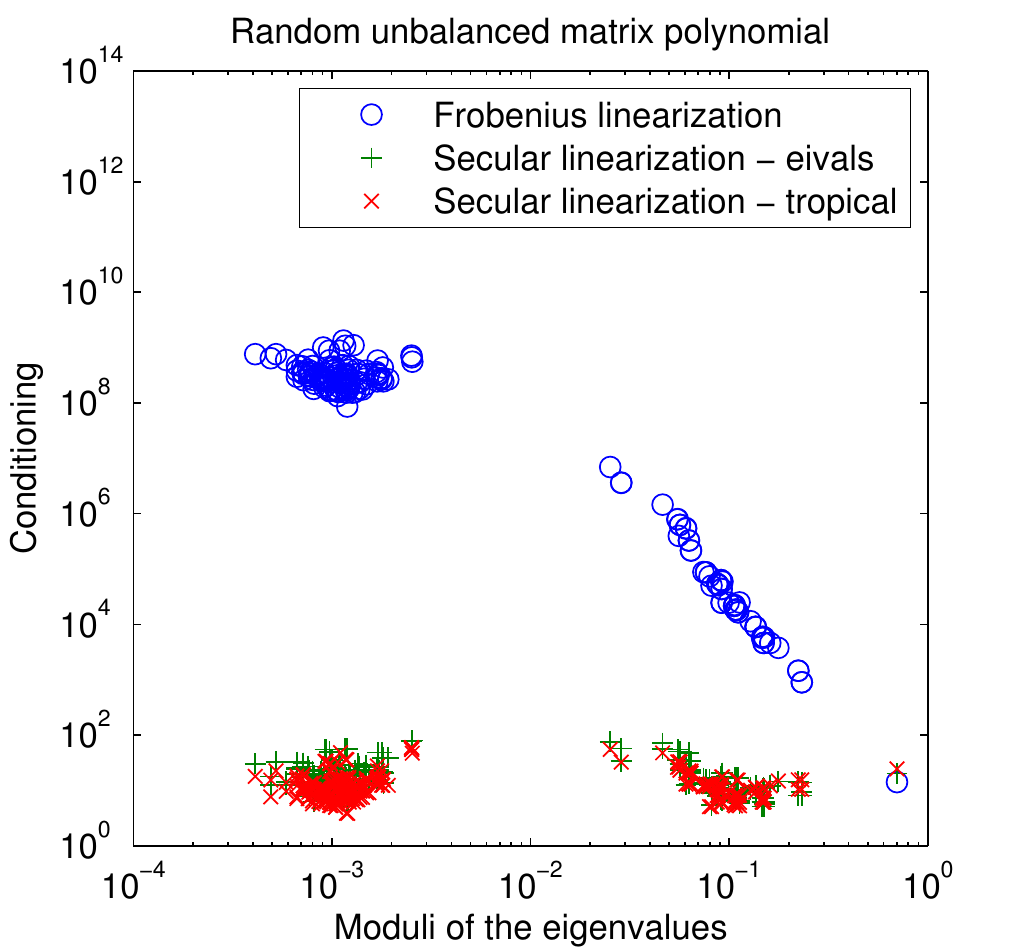}
  \end{minipage} \\[12pt]
  
  \begin{minipage}{0.48\textwidth}
    \includegraphics[width=\textwidth]{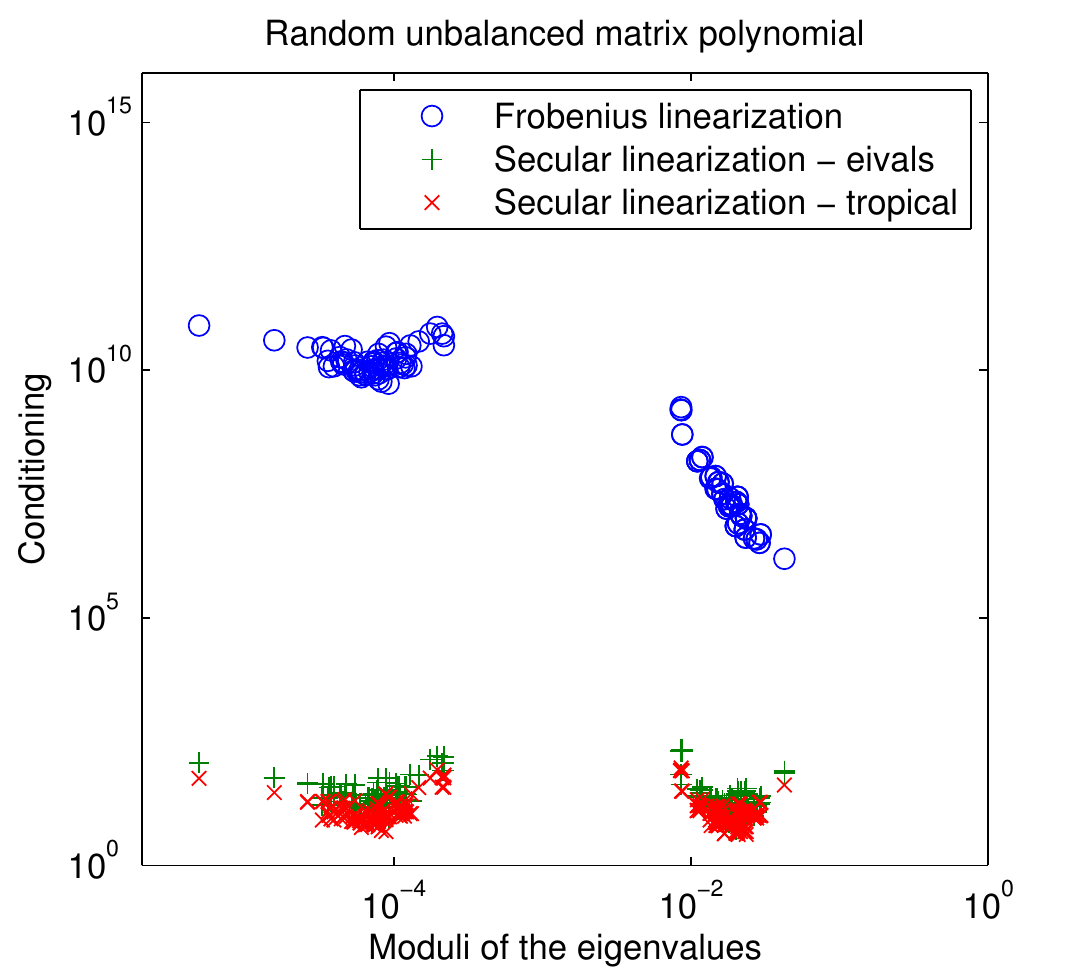}
  \end{minipage}
  ~
  \begin{minipage}{0.48\textwidth}
    \includegraphics[width=\textwidth]{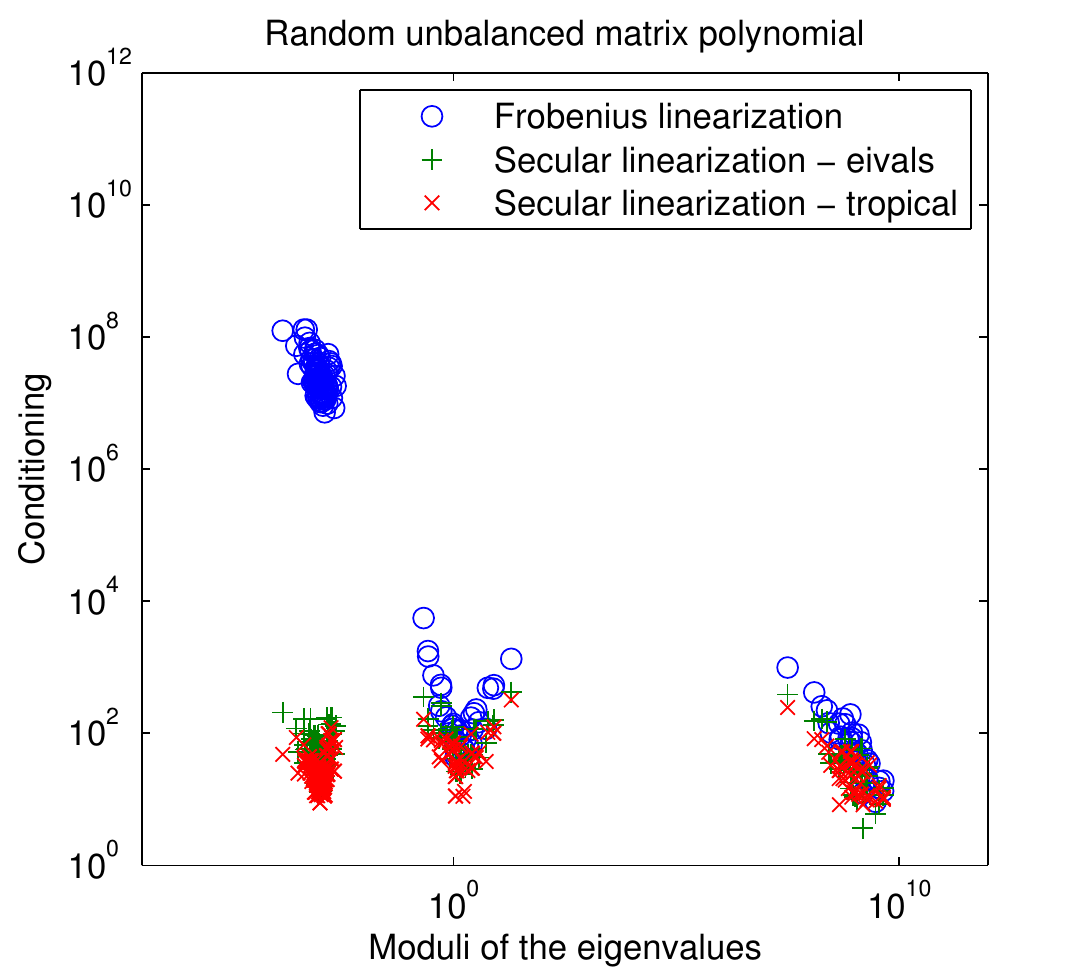}
  \end{minipage} 
  
  \caption{Conditioning of the eigenvalues of different linearizations for some matrix polynomials
with random coefficients having unbalanced norms. }
  \label{fig:randommatrixtest}
\end{figure}

We have considered three linearizations: the standard Frobenius
companion linearization, and two versions of our secular linearizations.  In
the first version the nodes $\beta_i$ are the mean of the moduli of
set of eigenvalues with close moduli multiplied by unitary complex
numbers. In the second, the values of $\beta_i$ are obtained by the
Pellet estimates delivered by the tropical roots.

In Figure~\ref{fig:randommatrixtest} we report the conditioning of the
eigenvalues, measured with Matlab's {\tt condeig}.

It is interesting to note that the conditioning of the secular
linearization is, in every case, not exceeding $10^3$. Moreover it can
be observed that no improvement is obtained on the conditioning of the
eigenvalues that are already well-conditioned. In contrast, there is a
clear improvement on the ill-conditioned ones. In this particular
case, this class of linearizations seems to give an almost uniform
bound to the condition number of all the eigenvalues.

Further examples come from the NLEVP collection of \cite{nlevp}. We
have selected some problems that exhibit bad conditioning.

As a first example we consider the problem {\tt
  orr\_sommerfeld}. Using the tropical roots we can find some values
inside the unique annulus that is identified by the Pellet theorem. In
this example the values obtained only give a partial picture of the
eigenvalues distribution. The Pellet theorem gives about {\tt 1.65e-4}
and {\tt 5.34} as lower and upper bound to the moduli of the
eigenvalues, but the tropical roots are rather small and near to the
lower bound. More precisely, the tropical
roots are {\tt 1.4e-3} and {\tt 1.7e-4} with multiplicities $3$ and $1$,
respectively.

\begin{figure}[h!t]
  \begin{minipage}{0.49\textwidth}  
    \includegraphics[width=\textwidth]{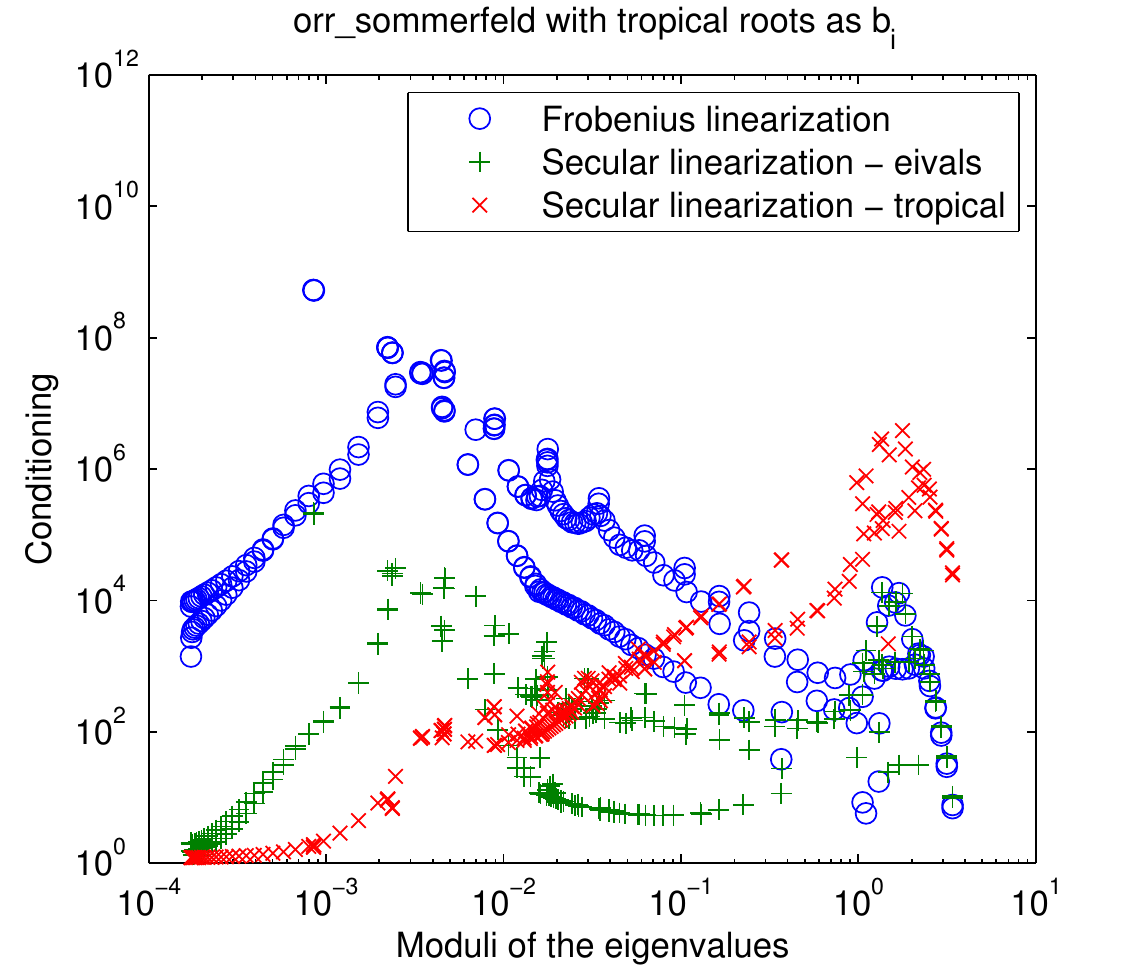}
  \end{minipage}
  \begin{minipage}{0.49\textwidth}
    \includegraphics[width=\textwidth]{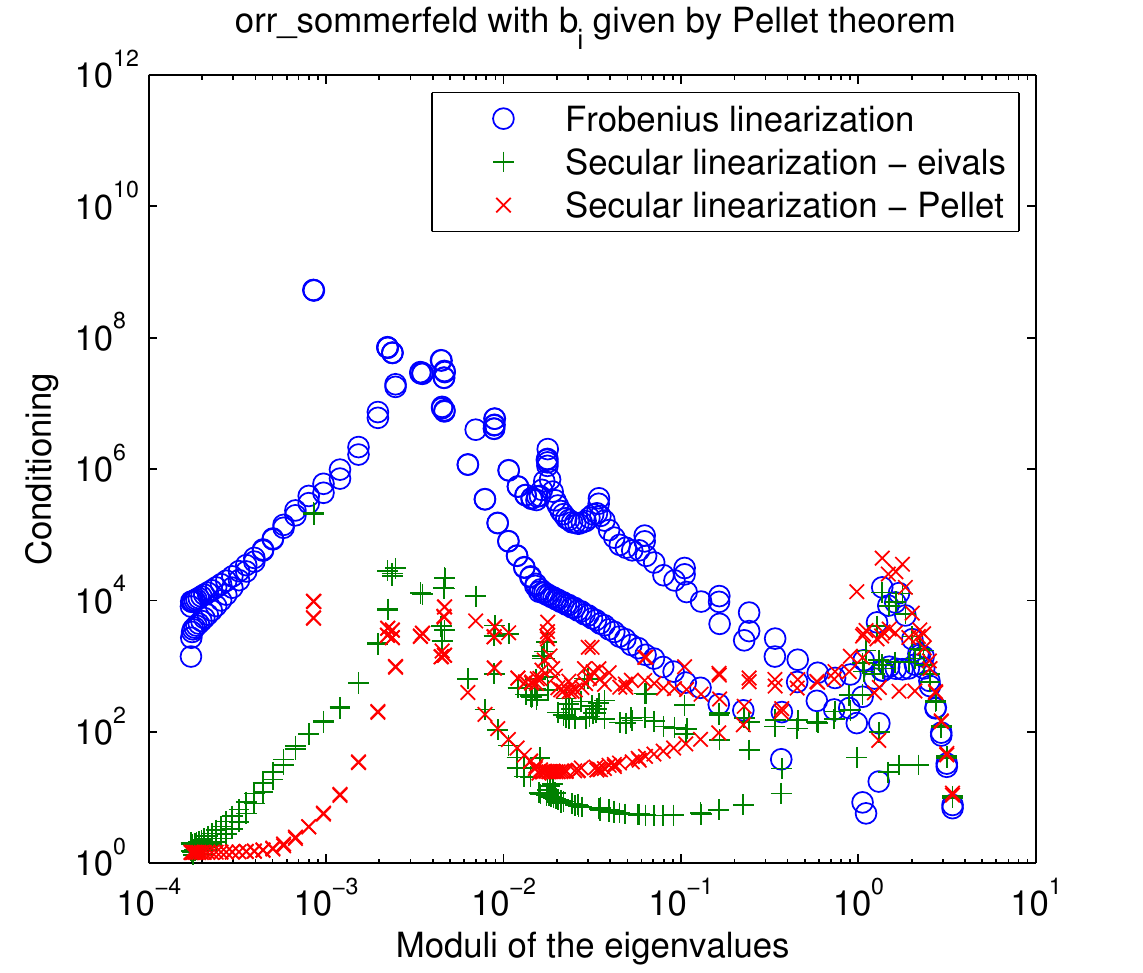}
  \end{minipage}
  
  \caption{On the left we report the conditioning of the Frobenius and of the secular linearization
  with the choices of $b_i$ as mean of subsets of eigenvalues with
  close moduli
  and as the estimates given by the tropical roots. On the right the tropical roots
  are coupled with estimates given by the Pellet theorem. }
  \label{fig:orr_sommerfeld}
\end{figure}

\begin{figure}[ht!] 
  \begin{center}
    \includegraphics[width=.7\textwidth]{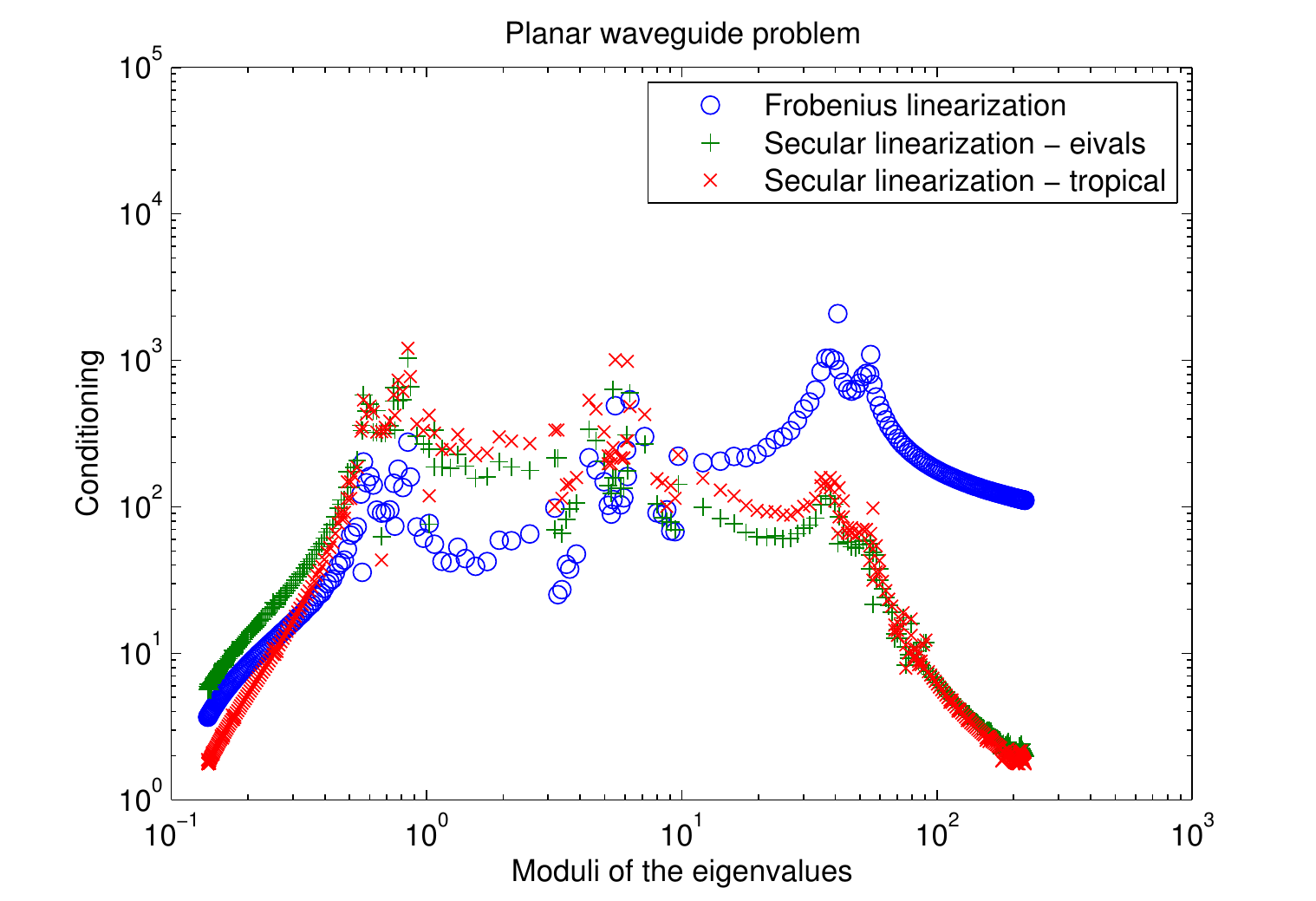}
  \end{center}
  
  \caption{Conditioning of the eigenvalues for three different linearizations 
    on the {\tt planar\_waveguide} problem. }
\label{fig:planar_waveguide}    
\end{figure}

This leads to a linearization $A(x)$ that is well-conditioned for the
smaller eigenvalues but with a higher conditioning on the eigenvalues
of bigger modulus as can be seen in Figure~\ref{fig:orr_sommerfeld} on
the left (the eigenvalues are ordered in nonincreasing order with
respect to their modulus). It can be seen, though, that coupling the
tropical roots with the standard Pellet theorem and altering the $b_i$
by adding a value slightly smaller than the upper bound (in this
example we have chosen $5$ but the result is not very sensitive to
this choice) leads to a much better result that is reported in
Figure~\ref{fig:orr_sommerfeld} on the right. In the right figure we
have used {\tt b = [ 1.7e-4, 1.4e-3, -1.4e-3, 5 ]}.  This seems to
justify that there exists a link between the quality of the
approximations obtained through the tropical roots and the
conditioning properties of the secular linearization.

We analyzed another example problem from the NLEVP collection
that is called {\tt planar\_waveguide}. The results are shown 
in Figure~\ref{fig:planar_waveguide}. 
This problem is a PEP of degree $4$ with two tropical roots approximately equal to 
$127.9$ and $1.24$. Again, it can be seen that for the eigenvalues of smaller modulus
(that will be near the tropical root $1.24$) the Frobenius linearization and the
secular one behave in the same way, whilst for the bigger ones the secular linearization
has some advantage in the conditioning. This may be justified by the fact that the
Frobenius linearization is similar to a secular linearization on the roots of the unity. 

Note that in this case the information obtained by the tropical roots seems more
accurate than in the {\tt orr\_sommerfeld} case, so the secular linearization built
using the tropical roots and the one built using the block-mean of the eigenvalues 
behave approximately in the same way. 

\begin{figure}[ht!]
\begin{center}
  \includegraphics[width=.7\textwidth]{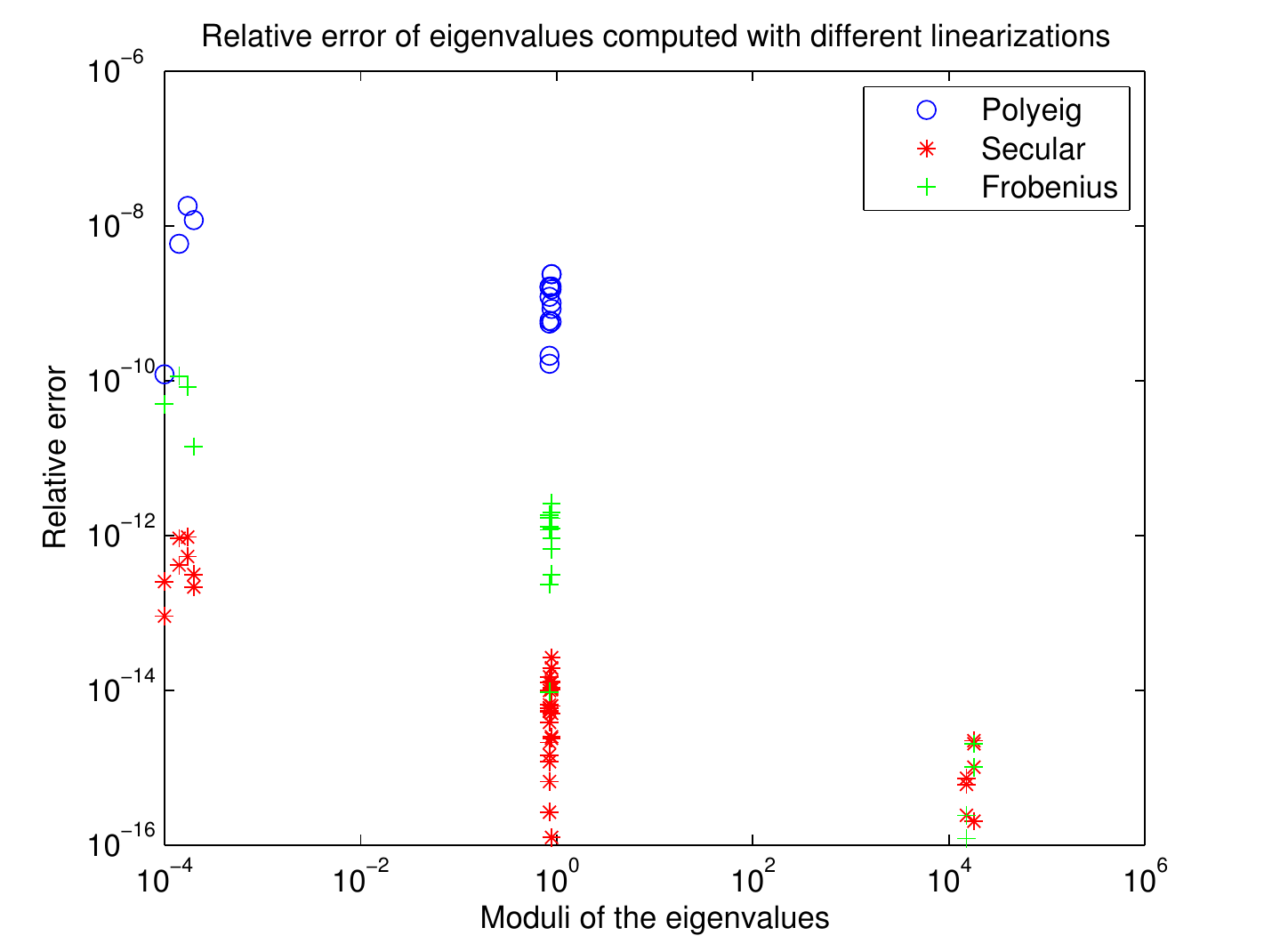}
\end{center}
\caption{The accuracy of the computed eigenvalues using polyeig, the Frobenius linearization and the secular
linearization with the $b_i$ obtained through the computation of the tropical roots.}
\label{fig:esempio_zeri}
\end{figure}

As a last example, we have tried to find the eigenvalues of a matrix polynomial
defined by integer coefficients. We have used polyeig and our secular linearization
(using the tropical roots as $b_i$) and the QZ method. We have chosen the polynomial
$P(x) = P_{11} x^{11} + P_9 x^9 + P_2 x^2 + P_0$ where
{\small \[
  P_{11} = \begin{bmatrix} 
    1 & 1 & 1 & 1 \\
      & 1 & 1 & 1 \\
      &   & 1 & 1 \\
      &   &   & 1 \\
  \end{bmatrix}, ~
  P_{9} = 10^8 \begin{bmatrix} 
    3 & 1 \\
    1 & 3 & 1 \\
      & 1 & 3 & 1 \\
      &   & 1 & 3 \\
  \end{bmatrix},
  P_2 = 10^8 P_{11}^t,~
  P_0 = \begin{bmatrix}
    1 \\
    & 2 \\
    & & 3 \\
    & & & 4 \\
  \end{bmatrix}.
\]
}
In this case the tropical roots are good estimates of the blocks of eigenvalues of
the matrix polynomial. We obtain the tropical roots $1.2664\cdot 10^4$, $0.9347$
and $1.1786\cdot 10^{-4}$ with multiplicities $2$, $7$ and $2$, respectively. We
have computed the eigenvalues with a higher precision and 
we have compared them with the results
of {\tt polyeig} and of {\tt eig} applied to the secular linearization and to the standard Frobenius linearization. 
Here, the secular linearization has been computed with the standard floating point arithmetic. As shown in Figure~\ref{fig:esempio_zeri} we have achieved much better accuracy with the latter choice. The secular linearization has achieved a relative error of the order
of the machine precision on all the eigenvalues except the smaller block (with modulus
about $10^{-4}$). In that case the relative error is about $10^{-12}$ but the absolute
error is, again, of the order of the machine precision. 
Moreover, {\tt polyeig} fails to detect the eigenvalues with
bigger modules, and marks them as eigenvalues at infinity. 
This can be noted by the fact that the circles relative
to the bigger eigenvalues are missing in {\tt polyeig}
plot of Figure~\ref{fig:esempio_zeri}. 

\section*{Acknowledgments}
We wish to thank the referees for their careful comments and useful remarks that
helped us to improve the presentation.

\bibliographystyle{plain}
\bibliography{biblio}

\end{document}